\let\oldeqref\eqref
\RenewDocumentCommand\eqref{s m}{%
  \IfBooleanTF#1%
  {\textup{\tagform@{\ref*{#2}}}}
  {\oldeqref{#2}}
}
\newcommand{\Card}{{\operatorname{Card}}}
\newcommand{\FE}[1]{{\mathcal {FE}({#1})}}
\newcommand{\Amp}{{\operatorname{Amp}}}
\newcommand{\BAmp}{{\operatorname{BAmp}}}
\newcommand{\Pos}{{\operatorname{Pos}}}
\newcommand{\mld}{{\operatorname{mld}}}
\newcommand{\Mth}{{\operatorname{Mon}^{2,\lt}_{\operatorname{Hdg}}}}
\newcommand{\C}{{\mathbb C}}
\newcommand{\Q}{{\mathbb Q}}
\newcommand{\R}{{\mathbb R}}
\newcommand{\Z}{{\mathbb Z}}
\newcommand{\coeff}{{\operatorname{coeff}}}
\newcommand{\Def}{{\operatorname{Def}}}
\newcommand{\Hdg}{\operatorname{Hdg}}
\newcommand{\lt}{{\rm{lt}}}
\newcommand{\Pic}{\operatorname{Pic}}
\newcommand{\reg}{{\operatorname{reg}}}
\renewcommand{\to}[1][]{\xrightarrow{\ #1\ }}
\newcommand*{\da@rightarrow}{\mathchar"0\hexnumber@\symAMSa 4B }
\newcommand*{\da@leftarrow}{\mathchar"0\hexnumber@\symAMSa 4C }
\newcommand*{\xdashrightarrow}[2][]{%
  \mathrel{%
    \mathpalette{\da@xarrow{#1}{#2}{}\da@rightarrow{\,}{}}{}%
  }%
}
\newcommand{\xdashleftarrow}[2][]{%
  \mathrel{%
    \mathpalette{\da@xarrow{#1}{#2}\da@leftarrow{}{}{\,}}{}%
  }%
}
\newcommand*{\da@xarrow}[7]{%
  \sbox0{$\ifx#7\scriptstyle\scriptscriptstyle\else\scriptstyle\fi#5#1#6\m@th$}%
  \sbox2{$\ifx#7\scriptstyle\scriptscriptstyle\else\scriptstyle\fi#5#2#6\m@th$}%
  \sbox4{$#7\dabar@\m@th$}%
  \dimen@=\wd0 %
  \ifdim\wd2 >\dimen@
    \dimen@=\wd2 %
  \fi
  \count@=2 %
  \def\da@bars{\dabar@\dabar@}%
  \@whiledim\count@\wd4<\dimen@\do{%
    \advance\count@\@ne
    \expandafter\def\expandafter\da@bars\expandafter{%
      \da@bars
      \dabar@ 
    }%
  }%
  \mathrel{#3}%
  \mathrel{%
    \mathop{\da@bars}\limits
    \ifx\\#1\\%
    \else
      _{\copy0}%
    \fi
    \ifx\\#2\\%
    \else
      ^{\copy2}%
    \fi
  }%
  \mathrel{#4}%
}
\newtheoremstyle{citing}
  {}
  {}
  {\itshape}
  {}
  {\bfseries}
  {\textbf{.}}
  {.5em}
  {\thmnote{#3}}
\theoremstyle{plain}
\newtheorem{theorem}[subsection]{Theorem}
\newtheorem{lemma}[subsection]{Lemma}
\newtheorem{corollary}[subsection]{Corollary}
\newtheorem{proposition}[subsection]{Proposition}
\theoremstyle{remark}
\theoremstyle{definition}
\newtheorem{conjecture}[subsection]{Conjecture}
\newtheorem{definition}[subsection]{Definition}
\numberwithin{equation}{section}
\theoremstyle{remark}
\newtheorem{remark}[subsection]{Remark}
\theoremstyle{citing}
\newsavebox\myboxA
\newsavebox\myboxB
\newlength\mylenA
\newcommand*\xtilde[2][0.8]{%
    \sbox{\myboxA}{$\m@th#2$}%
    \setbox\myboxB\null
    \ht\myboxB=\ht\myboxA%
    \dp\myboxB=\dp\myboxA%
    \wd\myboxB=#1\wd\myboxA
    \sbox\myboxB{$\m@th\widetilde{\copy\myboxB}$}
    \setlength\mylenA{\the\wd\myboxA}
    \addtolength\mylenA{-\the\wd\myboxB}%
    \ifdim\wd\myboxB<\wd\myboxA%
       \rlap{\hskip 0.5\mylenA\usebox\myboxB}{\usebox\myboxA}%
    \else
        \hskip -0.5\mylenA\rlap{\usebox\myboxA}{\hskip 0.5\mylenA\usebox\myboxB}%
    \fi}
\newbox\usefulbox
\def\getslant #1{\strip@pt\fontdimen1 #1}
\def\xxtilde #1{\mathchoice
 {{\setbox\usefulbox=\hbox{$\m@th\displaystyle #1$}%
    \dimen@ \getslant\the\textfont\symletters \ht\usefulbox
    \divide\dimen@ \tw@ 
    \kern\dimen@ 
    \xtilde{\kern-\dimen@ \box\usefulbox\kern\dimen@ }\kern-\dimen@ }}
 {{\setbox\usefulbox=\hbox{$\m@th\textstyle #1$}%
    \dimen@ \getslant\the\textfont\symletters \ht\usefulbox
    \divide\dimen@ \tw@ 
    \kern\dimen@ 
    \xtilde{\kern-\dimen@ \box\usefulbox\kern\dimen@ }\kern-\dimen@ }}
 {{\setbox\usefulbox=\hbox{$\m@th\scriptstyle #1$}%
    \dimen@ \getslant\the\scriptfont\symletters \ht\usefulbox
    \divide\dimen@ \tw@ 
    \kern\dimen@ 
    \xtilde{\kern-\dimen@ \box\usefulbox\kern\dimen@ }\kern-\dimen@ }}
 {{\setbox\usefulbox=\hbox{$\m@th\scriptscriptstyle #1$}%
    \dimen@ \getslant\the\scriptscriptfont\symletters \ht\usefulbox
    \divide\dimen@ \tw@ 
    \kern\dimen@ 
    \xtilde{\kern-\dimen@ \box\usefulbox\kern\dimen@ }\kern-\dimen@ }}%
 {}}
\newcommand*\xoverline[2][0.75]{%
    \sbox{\myboxA}{$\m@th#2$}%
    \setbox\myboxB\null
    \ht\myboxB=\ht\myboxA%
    \dp\myboxB=\dp\myboxA%
    \wd\myboxB=#1\wd\myboxA
    \sbox\myboxB{$\m@th\overline{\copy\myboxB}$}
    \setlength\mylenA{\the\wd\myboxA}
    \addtolength\mylenA{-\the\wd\myboxB}%
    \ifdim\wd\myboxB<\wd\myboxA%
       \rlap{\hskip 0.5\mylenA\usebox\myboxB}{\usebox\myboxA}%
    \else
        \hskip -0.5\mylenA\rlap{\usebox\myboxA}{\hskip 0.5\mylenA\usebox\myboxB}%
    \fi}
\def\xxoverline #1{\mathchoice
 {{\setbox\usefulbox=\hbox{$\m@th\displaystyle #1$}%
    \dimen@ \getslant\the\textfont\symletters \ht\usefulbox
    \divide\dimen@ \tw@ 
    \kern\dimen@ 
    \overline{\kern-\dimen@ \box\usefulbox\kern\dimen@ }\kern-\dimen@ }}
 {{\setbox\usefulbox=\hbox{$\m@th\textstyle #1$}%
    \dimen@ \getslant\the\textfont\symletters \ht\usefulbox
    \divide\dimen@ \tw@ 
    \kern\dimen@ 
    \xoverline{\kern-\dimen@ \box\usefulbox\kern\dimen@ }\kern-\dimen@ }}
 {{\setbox\usefulbox=\hbox{$\m@th\scriptstyle #1$}%
    \dimen@ \getslant\the\scriptfont\symletters \ht\usefulbox
    \divide\dimen@ \tw@ 
    \kern\dimen@ 
    \xoverline{\kern-\dimen@ \box\usefulbox\kern\dimen@ }\kern-\dimen@ }}
 {{\setbox\usefulbox=\hbox{$\m@th\scriptscriptstyle #1$}%
    \dimen@ \getslant\the\scriptscriptfont\symletters \ht\usefulbox
    \divide\dimen@ \tw@ 
    \kern\dimen@ 
    \xoverline{\kern-\dimen@ \box\usefulbox\kern\dimen@ }\kern-\dimen@ }}%
 {}}
\theoremstyle{definition}
\title{Footnotes to the birational geometry of primitive symplectic varieties}
\author{Christian Lehn}
\address{Christian Lehn\\ Fakult\"at f\"ur Mathematik\\ Technische Universit\"at Chemnitz\\
Reichenhainer Stra\ss e 39, 09126 Chemnitz, Germany}
\email{christian.lehn@mathematik.tu-chemnitz.de}
\author{Giovanni Mongardi}
\address{Giovanni Mongardi\\Alma Mater Studiorum, Università di Bologna,  P.zza di porta san Donato, 5, 40126 Bologna, Italia}
\email{giovanni.mongardi2@unibo.it}
\author{Gianluca Pacienza}
\address{Gianluca Pacienza\\ 
Universit\'e de Lorraine, CNRS, IECL\\
F-54000 Nancy -- France }
\email{gianluca.pacienza@univ-lorraine.fr}
\begin{document}

\begin{abstract}
    In this note, we extend to the singular case some results on the birational geometry of irreducible holomorphic symplectic manifolds.
\end{abstract}

\subjclass[2020]{14E30, 14J42 (primary), 14E05, 32Q25, 32S15, 53C26 (secondary).}
\keywords{hyperk\"ahler variety, primitive symplectic variety, birational boundedness, wall divisor, MMP}



\maketitle
\bigskip

\hfill{\it{In memoria di Alberto Collino}}
\setcounter{tocdepth}{1}
\tableofcontents

\section{Introduction}
Irreducible holomorphic symplectic (IHS) manifolds and their singular generalizations (the so-called primitive symplectic varieties) form a relatively well-studied class of varieties. Apart from their intrinsic interest, they offer the possibility to test general conjectures in algebraic or complex geometry as on the one hand, they are well-behaved, and on the other hand, they possess a sufficiently rich geometry. Their birational geometry is particularly interesting and provides an instance of this principle. As a sample of this (and taking the risk of leaving many important results aside), fibers of the period map are exactly birational IHS manifolds (cf. \cite{Ver13}, \cite[Corollary 6.1]{Huy12}), the MMP of moduli spaces of sheaves on a $K3$ surface has been studied systematically via wall-crossing with respect to Bridgeland stability conditions (cf. \cite{BM14mmp}), and   
termination of flips or the Morrison--Kawamata cone conjecture, which are open problems in general, are already known for IHS manifolds, cf. \cite{LP16}, \cite{AV17}. 

In this note, we discuss three aspects of the birational geometry of primitive symplectic varieties that have been the object of considerable recent interest: effective birationality, termination of flips, and monodromy invariance of wall divisors, which we present in detail below.

\subsection*{Effective Birationality}
First, we concentrate on effective birationality. The problem of boundedness for the birationality of pluricanonical maps has attracted much attention in the last 10 years (see the papers \cite{HaconACC,Bir20} and the references therein, for results in the framework of irreducible holomorphic symplectic varieties see \cite{Charles16}). Here, we are able to provide an effective version of such results in the most general setting, for primitive symplectic varieties. Recall that by the Bogomolov--Beauville--Fujiki form, the second cohomology (more precisely, the torsion free part) of such a variety embeds into its dual. We denote by $A_X:=H^2(X,\Z)^\vee/H^2(X,\Z)$ the \emph{discriminant group}.

\begin{theorem}\label{thm:A}
Let X be a projective primitive symplectic variety of dimension $2n$ and let $L\in \Pic(X)$ be a big line bundle on it. 
Then for all 
\begin{equation}\label{eq:eff} 
m\geq \frac{1}{2}(2n+2)(2n+3)[(4 \Card(A_X))^{\rho(X)-1}] !,
\end{equation}
the map associated to the linear system $|mL|$
is birational onto its image. 
\end{theorem}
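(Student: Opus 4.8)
The plan is to transpose to the singular setting the route to effective birationality of \cite{Charles16} for irreducible holomorphic symplectic manifolds: reduce to the case of an ample polarization on a good birational model, and then apply an effective point--separation statement of Angehrn--Siu--Koll\'ar type --- which, crucially, does not see the top self--intersection of the polarization --- keeping every constant under control by working inside the Bogomolov--Beauville--Fujiki (BBF) lattice.

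\textbf{Step 1: reduction to an ample polarization.} Since $L$ is big and $K_X\equiv 0$, the section ring $R(X,L)=\bigoplus_{m\ge 0}H^0(X,mL)$ is finitely generated: picking $D\in|m_0L|$ with $m_0\gg 0$ and $0<\varepsilon\ll 1$, this ring is, up to a Veronese, the canonical ring of the klt pair $(X,\frac{\varepsilon}{m_0}D)$ (whose canonical class is $\equiv\varepsilon L$), so finite generation follows from that of klt canonical rings. Let $Y:=\operatorname{Proj}R(X,L)$, let $L_Y$ be the induced ample $\Q$--Cartier polarization, and $\psi\colon X\dashrightarrow Y$ the natural birational map. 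Then $H^0(X,mL)=H^0(Y,mL_Y)$ for all $m$ and $\phi_{|mL|}=\phi_{|mL_Y|}\circ\psi$, so it suffices to make $|mL_Y|$ birational. Moreover $\psi$ is crepant, so $Y$ has symplectic singularities; in particular $Y$ is Gorenstein with canonical singularities and $K_Y\cong\sO_Y$.

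\textbf{Step 2: uniform control of the singularities of $Y$ (the crux).} The map $\psi$ contracts precisely the prime divisors $E_1,\dots,E_k$ in the negative part of $L$, together with the flipping loci. The $E_i$ span a negative--definite sublattice of $\NS(X)\subset H^2(X,\Z)_{\tf}$, which has rank $k\le\rho(X)-1$ because $\NS(X)$ also contains the big class $L$ with $q(L)>0$ (Hodge index). By the Riemann--Roch estimates for wall and exceptional divisors on primitive symplectic varieties, each $q(E_i)$ is negative and bounded below in terms of $\Card(A_X)$, and each divisibility $\operatorname{div}(E_i)$ divides $\Card(A_X)$; combining these one bounds the discriminant of $\langle E_1,\dots,E_k\rangle$, hence the order of every local class group of $Y$, and in particular the Cartier index $r$ of $L_Y$, by $(4\Card(A_X))^{\rho(X)-1}$. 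This lattice bookkeeping, resting on the input Riemann--Roch inequality for wall divisors in the singular category, is the real point: everything downstream is soft, and the delicate feature of the statement is exactly that the bound depends only on $\Card(A_X)$ and $\rho(X)$, not on $q(L)$ nor on the deformation type of $X$.

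\textbf{Step 3: effective point separation.} On the $2n$--dimensional klt variety $Y$, with $K_Y\cong\sO_Y$, put $H:=rL_Y$; this is an ample \emph{Cartier} divisor, so we are in the standard setting for effective base--point--freeness. By an effective base--point--freeness/point--separation result of Angehrn--Siu--Koll\'ar type, $|m'H|$ separates two general points of $Y$ --- hence $\phi_{|m'H|}$ is birational onto its image --- once $m'\ge\frac12(2n+2)(2n+3)=\binom{2n+3}{2}$, the ``two--point'' bound in dimension $2n$. Writing this back in terms of $L_Y$ through $H=rL_Y$, and absorbing in the factorial both the passage from multiples of $r$ to all sufficiently large $m$ and the estimate $r\le(4\Card(A_X))^{\rho(X)-1}$ (a fortiori $\le(4\Card(A_X))^{\rho(X)-1}!$), we conclude that $|mL_Y|$, hence $|mL|$, is birational onto its image for all $m$ as in \eqref{eq:eff} --- in fact with considerable room to spare.
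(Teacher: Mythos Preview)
Your overall shape is recognizable --- reduce to a model where the polarization is nef/ample, then apply Koll\'ar's extension of Angehrn--Siu --- but Step~2 is a genuine gap, and it is also where your route diverges from the paper's in a way that matters.

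First, the claim that each $q(E_i)$ is bounded below purely in terms of $\Card(A_X)$ via ``Riemann--Roch estimates for wall and exceptional divisors'' is not correct as stated and is not how the bound is obtained. The actual control on denominators (Proposition~\ref{prop:bound-denom}, which is \cite[Corollary~4.11]{KMPP19} with the hypotheses verified) does not come from bounding the squares $q(E_i)$. It comes from the precise relation $[\ell]=-2E^\vee/q(E)$ between a prime exceptional divisor $E$ and the generic curve $\ell$ in its ruling, together with the fact that $E$ or $E/2$ is primitive. These are the nontrivial geometric inputs from \cite{LMP21,LMP22} (contractibility of prime exceptionals on a locally trivially deformation equivalent model, the structure of the contracted curve, and integrality of the associated reflection in the terminal case). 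Your lattice bookkeeping --- discriminant of $\langle E_1,\dots,E_k\rangle$ bounds local class groups of $Y$ bounds the Cartier index of $L_Y$ --- is asserted, not proved, and the first link in that chain rests on a bound on $q(E_i)$ you do not have.

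Second, and relatedly, the paper does \emph{not} pass to the ample model $Y=\operatorname{Proj} R(X,L)$. It stays inside the category of $\Q$-factorial terminal primitive symplectic varieties: after clearing denominators by $a=(4\Card(A_X))^{\rho(X)-1}!$ in the Boucksom--Zariski decomposition $aL=P+N$, the positive part $P$ lies in $\overline{\FE{X}}=\overline{\BAmp(X)}$ (this equality is \cite[Proposition~5.8]{LMP22} and uses terminality and $\Q$-factoriality), so there is a birational map $f\colon X\dashrightarrow X'$ to another $\Q$-factorial terminal primitive symplectic variety on which $P'=f_*P$ is an \emph{integral} nef big \emph{Cartier} divisor. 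Koll\'ar's result then applies directly to $P'$ on $X'$, with no Cartier-index gymnastics. The reduction to the $\Q$-factorial terminal case is handled separately at the end, by pulling back along a $\Q$-factorial terminalization $\pi\colon Y\to X$ (which exists by \cite{BCHM10}) and noting that the map associated to $|m\pi^*L|$ factors through $\pi$. Your route through $\operatorname{Proj} R(X,L)$ trades this for a model whose singularities and Cartier indices you then have to control from scratch, and that is exactly the step that is missing.
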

If we replace $\rho(X)$ with $h^{1,1}(X)$ in equation (\ref{eq:eff}), we obtain an effective bound that holds for the whole family of deformations of $X$. Notice that in particular, for any integer $n$ and positive constant $C$, the family of all projective primitive symplectic varieties of dimension $2n$ and fixed deformation type, endowed with a big line bundle of volume at most $C$, is birationally bounded. Therefore, the theorem can be regarded an effective version of Birkar's birational boundedness theorem for pluri log-canonical maps (cf. \cite[Corollary 1.4]{Bir20}) in the setting of primitive symplectic varieties. In the smooth case, the result was obtained in \cite[Corollary 1.3]{KMPP19}. The key for the generalization to primitive symplectic varieties is the recent study of the numerical and geometric properties of prime exceptional in this setting, cf. \cite{LMP21, LMP22}.

Notice that for deformations of moduli spaces of sheaves, the bound is even more explicit.
\begin{corollary}\label{cor:moduli}
Let X be a projective primitive symplectic variety which is deformation equivalent to a moduli space of sheaves $M_v(S)$ with respect to some $v$-generic polarization $H$ on a projective $K3$ surface $S$ (respectively to $K_v(S)$ in case $S$ is abelian), for a Mukai vector  $v=aw$ on $S$ with $w^2=2k$ such that $(S,v,H)$ is an $(a,k)$--triple in the sense of \cite[Definition 1.3]{peregorapagnetta20}. 
Then for any big line bundle $L\in \Pic(X)$, the map associated to the linear system $|mL|$ is birational onto its image for any $$m\geq \frac{1}{2}(\dim(X)+2)(\dim(X)+3)[(8k)^{\rho(X)-1}] !.$$
\end{corollary}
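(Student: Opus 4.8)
The plan is to derive the Corollary directly from Theorem~\ref{thm:A} by showing that, under its hypotheses, the discriminant group satisfies $\Card(A_X) = 2k$. Indeed, with $\dim X = 2n$ the bound of Theorem~\ref{thm:A} reads $m \ge \tfrac{1}{2}(2n+2)(2n+3)[(4\Card(A_X))^{\rho(X)-1}]!$, and substituting $\Card(A_X)=2k$ gives exactly $4\Card(A_X)=8k$, hence the asserted inequality $m \ge \tfrac{1}{2}(\dim X+2)(\dim X+3)[(8k)^{\rho(X)-1}]!$. So everything reduces to the computation of $\Card(A_X)$.

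First I would use that the Beauville--Bogomolov--Fujiki lattice $(H^2(X,\Z)_{\tf}, q_X)$ of a projective primitive symplectic variety is a deformation invariant up to isometry; in particular its discriminant, and hence $\Card(A_X)$, depends only on the deformation type of $X$. It therefore suffices to compute $\Card(A_Y)$ for $Y$ equal to the relevant moduli space $M_v(S)$ (resp. $K_v(S)$) itself.

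Next comes the key input. By the work of Perego--Rapagnetta on the second integral cohomology of moduli spaces of sheaves on projective $K3$ and abelian surfaces — extending the classical description of Mukai, O'Grady and Yoshioka in the primitive case — for an $(a,k)$-triple $(S,v,H)$ with $v=aw$, $w$ primitive and $w^2=2k$, there is an isometry $H^2(Y,\Z)_{\tf} \cong w^\perp$, where $w^\perp$ is the orthogonal complement of $w$ inside the Mukai lattice $\widetilde H(S,\Z)$ equipped with the Mukai pairing (whose restriction is the BBF form on $Y$). Since $\widetilde H(S,\Z)$ is unimodular — it is $U^{\oplus 4}\oplus E_8(-1)^{\oplus 2}$ for a $K3$ surface and $U^{\oplus 4}$ for an abelian surface — and $w$ is primitive with $w^2=2k$, the orthogonal complement $w^\perp$ has discriminant group isomorphic to $\Z/2k\Z$. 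Hence $\Card(A_Y)=2k$, which, combined with the deformation invariance and Theorem~\ref{thm:A}, concludes the argument.

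The step I expect to require the most care is the identification of $H^2$ together with its BBF form for the \emph{singular} moduli spaces arising from non-primitive Mukai vectors: this is precisely where the $(a,k)$-triple hypothesis enters and where one must invoke the (nontrivial) results of Perego--Rapagnetta rather than the classical primitive case. The abelian case additionally requires recalling that the pertinent Mukai lattice remains unimodular, so that the discriminant computation carries over unchanged; apart from that, the derivation from Theorem~\ref{thm:A} is purely formal.
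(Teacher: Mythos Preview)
Your proposal is correct and follows essentially the same route as the paper: the paper deduces the corollary immediately from Theorem~\ref{thm:A} together with Proposition~\ref{prop:PR}, whose content is precisely that $\Card(A_X)=2k$, obtained by invoking Perego--Rapagnetta's isometry $H^2(X,\Z)\cong w^\perp=v^\perp$ and then reading off the discriminant. Your explicit justification via unimodularity of the Mukai lattice and your remark on deformation invariance of the BBF lattice spell out steps the paper leaves implicit, but the argument is the same.
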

Recall that with the notation of the above theorem $\dim M_v(S)= 2a^2k+2\epsilon$, where $\epsilon = 1$ (respectively $\epsilon=-1$) if $S$ is a $K3$ (resp. an abelian) surface (see \cite[Theorem 4.4]{KLS06}.
\subsection*{Termination of Flips}
Termination of flips is one major open problem in the MMP. A well-known general strategy due to Shokurov \cite{ShoV} involving the study of some regularity properties of invariants of the singularities (the so-called minimal log-discrepancies) has allowed to prove termination for IHS manifolds \cite{LP16}. Due to the generalization to hyperquotient singularities of the lower semicontinuity of minimal log-discrepancies obtained in \cite{NS20} and to the advances in the theory of primitive symplectic varieties, we remark here that Shokurov's strategy works now in a more general framework. 

\begin{theorem}\label{thm:termination}
Let X be a projective primitive symplectic variety with terminal hyperquotient singularities. 
Let $\Delta$ be an effective $\mathbb R$-divisor on $X$, such that the pair $(X,\Delta)$ is log-canonical.
Then every log-MMP for  $(X,\Delta)$ terminates in a minimal model
 $(X',\Delta')$ where $X'$ is a symplectic variety with (canonical) singularities and $\Delta'$
is an effective, nef $\mathbb R$-divisor.
\end{theorem}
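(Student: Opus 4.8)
The plan is to run Shokurov's strategy for termination of flips \cite{ShoV}, in the form in which it was implemented for irreducible holomorphic symplectic manifolds in \cite{LP16}, the new ingredients being the lower semicontinuity of minimal log discrepancies for hyperquotient singularities proved in \cite{NS20} together with the structure theory of primitive symplectic varieties of \cite{LMP21, LMP22}. One may assume $X$ is $\Q$-factorial, replacing it if necessary by a $\Q$-factorial terminalization, which is again a projective primitive symplectic variety.

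\emph{Reduction to termination of flips.} Since $X$ is a primitive symplectic variety, $K_X$ is torsion, so $K_X\equiv 0$ and a log-MMP for $(X,\Delta)$ is really a $\Delta$-MMP; as $\Delta$ is effective, hence pseudo-effective, such an MMP can only terminate in a minimal model, never in a Mori fibre space. Divisorial contractions strictly drop the Picard number, so only finitely many of them occur; hence any non-terminating MMP becomes, after finitely many steps, an infinite sequence of $(K+\Delta)$-flips $X_0\ratl X_1\ratl\cdots$. Every flip is an isomorphism in codimension one and so transports the symplectic $2$-form on the smooth locus: each $X_i$ is again a projective primitive symplectic variety with $K_{X_i}\equiv 0$, in particular with canonical Gorenstein singularities. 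This is where the hypothesis is used: one must check, via the analytic-local description of the singularities of primitive symplectic varieties in \cite{LMP21, LMP22}, that along the whole sequence the $X_i$ retain hyperquotient singularities, so that \cite{NS20} keeps applying.

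\emph{The termination argument.} Assume the tail of flips is infinite, and for the $i$-th flip let $X_i\to Z_i\leftarrow X_{i+1}$ be the flipping and flipped contractions and $\Gamma_{i+1}\subset X_{i+1}$ the flipped locus. The Negativity Lemma, applied to the flipping contraction where $K+\Delta$ is relatively anti-ample and to the flipped one where it is relatively ample, gives for every divisor $E$ over $Z_i$ the inequality $a(E;X_{i+1},\Delta_{i+1})\ge a(E;X_i,\Delta_i)$, strict whenever the centre of $E$ on $X_i$ lies in the flipping locus (equivalently, its centre on $X_{i+1}$ lies in $\Gamma_{i+1}$); thus $\mld(\,\cdot\,;X_i,\Delta_i)$ is non-decreasing along a flip and strictly increases along the flipped locus. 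Combining this monotonicity with the lower semicontinuity of $\mld$ from \cite{NS20} — which bounds the $\mld$ at a point by the $\mld$ at the generic point of any subvariety containing it — and running Shokurov's machinery exactly as in \cite{ShoV, LP16}, one obtains a sequence of minimal log discrepancies that is forced to be strictly monotone yet must lie in a set satisfying the ascending chain condition (the latter holding here, as in \cite{LP16}, because the singularities of the $X_i$ are Gorenstein, so have integral discrepancies, and because the coefficients of $\Delta_i$ never change along the flips). This contradiction proves termination.

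\emph{The minimal model and the main obstacle.} At the terminal step $(X',\Delta')$ one has $K_{X'}+\Delta'$ nef, hence $\Delta'$ nef since $K_{X'}\equiv 0$; $\Delta'$ is effective, being the strict transform of $\Delta$; and $X'$, as a symplectic variety, has canonical singularities. I expect the real difficulty to be the point flagged in the reduction step: a flip can a priori be said only to preserve the symplectic (canonical Gorenstein) nature of the singularities, and genuine work — via the classification and structure results of \cite{LMP21, LMP22} — is needed to guarantee that the intermediate varieties remain of the hyperquotient type for which \cite{NS20} provides lower semicontinuity. Once that is secured, the rest of the argument is essentially a transcription of \cite{LP16}.
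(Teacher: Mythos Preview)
You have correctly located the crux --- verifying LSC of mld along the MMP --- but your proposed fix does not work and misidentifies the relevant input. The intermediate varieties $X_i$ need not be terminal (after a divisorial contraction they are only canonical), and \cite{LMP21,LMP22} contain no ``analytic-local description'' that would let you check directly that the $X_i$ remain hyperquotient. The paper's maneuver is different: for each intermediate $Z$ take a $\Q$-factorial terminalization $Y$; then $Y$ and $X$ are birational $\Q$-factorial terminal primitive symplectic varieties, hence by \cite[Theorem~6.16]{BL21} they are \emph{locally trivially} deformation equivalent, and an elementary lemma shows that the hyperquotient property is invariant under locally trivial deformation. Now \cite{NS20} gives LSC on $Y$, and \cite[Theorem~3.8]{LP16} descends LSC from a crepant terminalization to $Z$. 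So the missing ingredient is \cite{BL21}, not \cite{LMP21,LMP22}, and the passage through a terminalization is essential.

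Your ACC argument is also incomplete. ``Gorenstein, so integral discrepancies'' controls $a(E;X_i,0)$, but the mlds in question are those of $(X_i,\Delta_i)$; since $\Delta$ is an $\R$-divisor, the shift by $\mathrm{mult}_E(\pi^*\Delta_i)$ destroys any obvious discreteness. The paper instead invokes Kawakita's finiteness of mlds on a \emph{fixed} variety with a fixed finite coefficient set, and then uses \cite[Theorem~6.16]{BL21} once more: along a sequence of flips all the $X_i$ are locally trivially deformation equivalent, hence share the same local analytic singularity types, so Kawakita's finite set is uniform across the sequence. With LSC and ACC in hand, Shokurov's theorem concludes exactly as you outline.
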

Recall that a variety $Y$ has hyperquotient singularities if $Y$ is a locally complete intersection inside an ambient variety which is the quotient of a smooth variety by a finite group action (see \cite[Theorem 1.2]{NS20} for a slightly more general definition). 

In particular, we immediately deduce the following for an important class of singular IHS varieties which has received much attention in recent years cf. e.g. \cite{Men20, menetriess, FM21, menetriess2}.
\begin{corollary}
Let $X$ be an IHS orbifold (in the sense of \cite[Definition 3.1]{Men20}. Let $\Delta$ be an effective $\mathbb R$-divisor on $X$, such that the pair $(X,\Delta)$ is log-canonical.
Then every log-MMP for  $(X,\Delta)$ terminates.
\end{corollary}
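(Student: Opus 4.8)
The plan is to deduce the statement from Theorem \ref{thm:termination}: it suffices to check that a (projective) IHS orbifold $X$ is a projective primitive symplectic variety with terminal hyperquotient singularities, after which Theorem \ref{thm:termination} applies to the pair $(X,\Delta)$ verbatim — and in fact delivers the sharper conclusion stated there.

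First I would check that $X$ is a projective primitive symplectic variety. As an orbifold, $X$ is normal with quotient singularities. Since $X$ is simply connected, $H^1(X,\mathcal O_X)=H^{0,1}(X)$ is a direct summand of $H^1(X,\C)=0$ by orbifold Hodge theory, hence vanishes; and $H^0(X^{\reg},\Omega^2_{X^{\reg}})$ is spanned by a holomorphic symplectic form $\sigma$ by definition. Étale-locally $X\cong\C^{2n}/G$ with $G$ finite, and $G$-invariance of $\sigma$ forces $G\subset\mathrm{Sp}(2n,\C)$; hence the local models are symplectic singularities in the sense of Beauville, so $X$ has rational Gorenstein singularities and $\sigma$ extends to any resolution. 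Projectivity is either part of the definition or simply imposed, being the natural setting for the MMP. Next, these singularities are hyperquotient: the local model $\C^{2n}/G$ is a codimension-zero local complete intersection inside the variety $\C^{2n}/G$ itself, which is the quotient of the smooth variety $\C^{2n}$ by the finite group $G$; thus quotient singularities fall under the notion recalled right after Theorem \ref{thm:termination}.

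It remains to see that $X$ has terminal singularities, which is the only substantive point. Recall that a symplectic variety is terminal exactly when its singular locus has codimension at least four — equivalently, in the local models $\C^{2n}/G$, exactly when $G$ contains no symplectic reflection (an element of $\mathrm{Sp}(2n,\C)$ with fixed subspace of dimension $2n-2$) — and for an IHS orbifold this codimension bound is ensured by the hypotheses of \cite[Definition 3.1]{Men20}. Hence $X$ is terminal, and Theorem \ref{thm:termination} yields the termination of every log-MMP for $(X,\Delta)$. (For a broader notion of symplectic orbifold allowing a codimension-two singular locus — e.g.\ Nikulin-type orbifolds with transverse $A_1$ singularities — one would first replace $X$ by a $\Q$-factorial terminalization before invoking Theorem \ref{thm:termination}.) The entire proof is thus a matter of matching hypotheses, with terminality the delicate half; there is no genuine obstacle beyond confirming that the orbifolds under consideration have singular locus of codimension at least four.
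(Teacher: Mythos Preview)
Your proposal is correct and matches the paper's approach: the paper gives no explicit proof, treating the corollary as an immediate specialization of Theorem~\ref{thm:termination}, and your argument spells out precisely the verification this requires---that an IHS orbifold in Menet's sense is a primitive symplectic variety whose quotient (hence hyperquotient) singularities are terminal because the definition forces the singular locus to have codimension at least four. Your parenthetical caveat about broader orbifold notions with codimension-two singularities is a sensible aside, though note that passing to a $\Q$-factorial terminalization would then require re-verifying the hyperquotient hypothesis on the new model.
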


\subsection*{Invariance of walls under parallel transport}
In the study of the birational geometry of IHS manifolds, wall divisors play a central role, analogous to the role played by $-2$ curves on K3 surfaces. As a final contribution, we extend an important property of wall divisors to the most general singular setting.  The following theorem is the content of Section~\ref{wall}:
\begin{theorem}\label{thm:walls}
Let $X$ be a projective primitive symplectic variety, and let $D\in Pic(X)$ be a wall divisor on it. Then $D$ remains a wall divisor under parallel transport, and there exists $\varphi\in \Mth(X)$ such that $\varphi(D)$ is dual to an extremal curve. Conversely, any divisor dual to an extremal curve is a wall divisor.
\end{theorem}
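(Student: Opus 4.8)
The plan is to carry over to the singular case the standard correspondence between wall divisors and walls of the nef cone. I will take as inputs: the global Torelli theorem and the description of the monodromy group $\Mth(X)$ for projective primitive symplectic varieties; the numerical and geometric behaviour of prime exceptional divisors from \cite{LMP21,LMP22} (in particular, a prime exceptional divisor $E$ satisfies $q(E)<0$, and its associated contraction contracts exactly the curves $C$ with $E\cdot C=0$); the description of the movable cone $\Movbar(X)$ and the fact that the positive cone $\Pos(X)$ is cut, by the hyperplanes $D'^\perp$ with $D'$ a wall divisor, into chambers that are exactly the $\Mth(X)$-translates of $\Nef(X)$; and the cone and contraction theorems for klt pairs (primitive symplectic varieties having klt singularities), together with the MMP in this setting (cf.\ Theorem~\ref{thm:termination} and the references therein). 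Recall that $D\in\Pic(X)$ with $q(D)<0$ is a wall divisor exactly when $g(D)^\perp\cap\BAmp(X)=\emptyset$ for every $g\in\Mth(X)$; since $q$ and this condition are $\Mth(X)$-stable, $\Mth(X)$ permutes the set of wall divisors, so the hyperplane arrangement $\mathcal A=\{D'^\perp : D' \text{ is a wall divisor}\}$ on $\Pos(X)$ is $\Mth(X)$-invariant, and I will assume, as part of the structure theory, that $\mathcal A$ is locally finite and that its chambers are the $\Mth(X)$-translates of $\Nef(X)$.

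\emph{Invariance under parallel transport.} Let $\gamma\colon H^2(X,\Z)\longrightarrow H^2(X',\Z)$ be a parallel transport operator between projective primitive symplectic varieties which is a Hodge isometry, so that $\gamma(D)\in\Pic(X')$ and $q'(\gamma(D))=q(D)<0$. Since parallel transport conjugates monodromy groups, $\gamma\,\Mth(X)\,\gamma^{-1}=\Mth(X')$, and the assertion reduces to $\gamma\bigl(\bigcup_{g\in\Mth(X)}g^{-1}\BAmp(X)\bigr)=\bigcup_{g'\in\Mth(X')}(g')^{-1}\BAmp(X')$. I would deduce this from the intrinsic description of the left-hand side inside $\Pos(X)$ — namely, as the union of the chambers of $\mathcal A$, equivalently the $\Mth(X)$-orbit of $\Nef(X)$ — which is transported verbatim by $\gamma$ once one has Torelli and \cite{LMP21,LMP22}. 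Alternatively, one can promote the $\Mth$-invariance of this locus to parallel-transport invariance using density of monodromy orbits on $\Pos(X)$, in the style of \cite{AV17}.

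\emph{Existence of $\varphi$.} Since $q(D)<0$, the orthogonal $D^\perp$ has signature $(1,\rho-2)$, hence meets $\Pos(X)$, and it is one of the hyperplanes of $\mathcal A$. I would take a point $p$ of $D^\perp\cap\Pos(X)$ general on $D^\perp$; it then lies on the common codimension-one boundary of exactly two chambers of $\mathcal A$. Writing one of them as $\varphi^{-1}(\Nef(X))$ with $\varphi\in\Mth(X)$, it follows that $\varphi(D)^\perp$ contains a codimension-one face $F$ of $\Nef(X)$, hence $\varphi(D)^\perp=\langle F\rangle$. A general class $\alpha\in F$ lies in $\Pos(X)$, so is big and nef, so semiample by the base-point-free theorem; the resulting birational morphism $c\colon X\longrightarrow Y$ is not an isomorphism, as $\alpha\notin\Amp(X)$, so the face $F$ is dual to an extremal ray $\R_{\geq 0}R$ of $\overline{\mathrm{NE}}(X)$ spanned by the class $R$ of a curve contracted by $c$. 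Since $\varphi(D)^\perp=\langle F\rangle=R^\perp$ and $q(\varphi(D))<0$, the class $\varphi(D)$ is a nonzero multiple of the Beauville--Bogomolov--Fujiki dual of $R$; that is, $\varphi(D)$ is dual to an extremal curve.

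\emph{Converse, and the main obstacle.} Conversely, let $R$ span an extremal ray of $\overline{\mathrm{NE}}(X)$ with contraction $c\colon X\longrightarrow Y$, and let $D$ be the dual of $R$, so that $D^\perp$ is the hyperplane spanned by the face $F=\Nef(X)\cap R^\perp$; as above, $c$ is birational, hence divisorial or small. If $c$ is divisorial with exceptional prime divisor $E$, then $E^\perp$ also contains $F$, whence $D^\perp=E^\perp$, so $D$ is proportional to $E$ and $q(D)<0$ by \cite{LMP21,LMP22}. If $c$ is small, then $D^\perp$ is a flopping wall and $q(D)<0$ follows from the intersection theory of the flop (or by specializing, within the locally trivial deformations, to a model on which $c$ becomes divisorial). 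Finally, for any $g\in\Mth(X)$ the hyperplane $g(D)^\perp$ again belongs to $\mathcal A$, while $\BAmp(X)=\bigcup_{f\colon X\dashrightarrow X'}f^*\Amp(X')$ is a union of chambers of $\mathcal A$; since no hyperplane of an arrangement meets a chamber, $g(D)^\perp\cap\BAmp(X)=\emptyset$, and $D$ is a wall divisor. The delicate point is the previous step: upgrading $D^\perp$, after a monodromy operator, to a genuine wall of $\Nef(X)$ and reading off the dual extremal curve. This leans on the full strength of the recently developed structure theory in the singular setting — global Torelli and the monodromy group, the exact shape of $\Movbar(X)$ and of its chamber decomposition, the local finiteness of $\mathcal A$ (needed to legitimize the ``general point of $D^\perp$'' argument) and its compatibility with bimeromorphic maps and with $\Mth(X)$ — together with semiampleness of big and nef classes on primitive symplectic varieties. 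By contrast, the first assertion is essentially formal once the relevant loci are recognized as intrinsic, and the third reduces to chamber combinatorics once $q(D)<0$ is available for small contractions.
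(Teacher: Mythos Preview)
Your proposal has a structural gap that stems from using the wrong definition. You recall a wall divisor as a class $D$ with $q(D)<0$ and $g(D)^\perp\cap\BAmp(X)=\emptyset$ for all $g\in\Mth(X)$, but the paper's Definition~\ref{definition wall divisor} imposes a \emph{third} condition: there must exist some $\varphi\in\Mth(X)$ with $\varphi(D)^\perp\cap\widebar{\BAmp}(X)\neq 0$ (closure taken in $\Pos(X)$). As Remark~\ref{rmk:third condition} explains, this condition is automatic only when $X$ is $\Q$-factorial, terminal, and $b_2\ge 5$, via the Morrison--Kawamata birational cone conjecture proved in that setting in \cite{LMP22}. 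For general primitive symplectic varieties it is an honest extra hypothesis, and it is precisely what makes the ``existence of $\varphi$'' step in the paper essentially tautological: condition (3) hands you a big and nef class orthogonal to $\varphi(D)$ on some birational model, and the base-point-free theorem produces the extremal curve.

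Your route instead \emph{assumes} the heavy structure theory --- that the hyperplane arrangement $\mathcal A$ is locally finite and that its chambers are exactly the $\Mth(X)$-translates of $\Nef(X)$ --- and uses it both for the existence of $\varphi$ and for parallel-transport invariance. But this package is not known for arbitrary primitive symplectic varieties; it is essentially the cone conjecture again, which the paper deliberately avoids by enriching the definition. So as written, your argument establishes the theorem only in the restricted ($\Q$-factorial, terminal, $b_2\ge 5$) range, where indeed it was already known. The paper's approach is genuinely different and lighter: parallel-transport invariance (Proposition~\ref{prop:walls_deform}) is proved by showing the dual extremal curve deforms along its Hodge locus (via \cite[Corollary~2.14]{LMP22}) and carrying the contraction along a locally trivial family, while the converse (Proposition~\ref{prop:contracted are wall}) uses the same deformation-of-curves mechanism to verify all three conditions directly, including condition~(3), with no chamber combinatorics at all. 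Your sketch for $q(D)<0$ in the small case (``specialize to a model where $c$ becomes divisorial'') is also not justified in this generality.
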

Notice that under the additional hypotheses of $\mathbb Q$-factoriality and terminality (and $b_2\geq 5$) the result above was proved in \cite[Proposition 7.4]{LMP22}, using, among other things, the fact that under these hypotheses reflections in prime exceptional divisors are integral. This is trivially false in the non-terminal case \cite[Example 3.12]{LMP22}. Hence, here we have to take a different path to tackle the general case. It is also important to notice that the definition of wall divisors in this larger framework (cf. Definition \ref{definition wall divisor}) includes an extra condition which can be deduced from the Morrison--Kawamata cone conjecture when $b_2 \geq 5$ and the variety is $\Q$-factorial and terminal, see Remark \ref{rmk:third condition} for a detailed discussion of this point.

\subsection*{Acknowledgments.} 
We thank the referee whose accurate observations pushed us to improve the content and the presentation of the paper, and Francesco Denisi for useful exchanges. 
C.L. was supported by the DFG research grant Le 3093/3-2 and by the SMWK research grant SAXAG. G.M. was supported by PRIN2020 research grant "2020KKWT53" and is a member of the INDAM-GNSAGA. G.P. was supported by the CNRS International Emerging Actions (IEA) project ``Birational and arithmetic aspects of orbifolds''.

\section{Preliminaries}
We start by recalling the main definitions about singular symplectic varieties. 

Let $X$ be an irreducible complex variety.
Recall that, for any integer $p\geq 1$, the sheaf of {\it reflexive holomorphic $p$-forms} on $X$
is $\Omega_X^{[p]}:=(\Omega_X^{p})^{**}$. If $X$ is normal, it can be alternatively (and equivalently) defined as $\iota_* \Omega_{X_{\reg}}^{p}$, where 
$\iota : X_{\reg}\hookrightarrow X$
is the inclusion of the regular locus of $X$.

Recall the following definition due to Beauville \cite{Bea00}. 
\begin{definition}\label{def:symp-sing}
Let X be a normal 
variety.
\begin{enumerate}
\item[i)] A \emph{symplectic form} on X is a closed reflexive $2$-form $\sigma$ on $X$
which is non-degenerate at each point of $X_{\reg}$.
\item[ii)]  If $\sigma$ is a symplectic form on $X$, the variety $X$ has \emph{symplectic
singularities} if for one (hence for every) resolution $f : \tilde X \to X$ of the singularities
of $X$, the holomorphic symplectic form $\sigma_{\reg} :=\sigma_{|X_{\reg}}$ extends to
a holomorphic $2$-form on $\tilde X$. In this case, the pair $(X,\sigma)$ is called \emph{symplectic variety}.
\item[iii)] A \emph{primitive symplectic variety} is a normal compact K\"ahler variety $X$ such that $h^1(X,\mathcal O_X)=0$ and $H^0(X,\Omega_X^{[2]})$ is generated by a holomorphic symplectic form $\sigma$ such that $X$ has symplectic singularities.
\end{enumerate}
\end{definition}

For the definition and basic properties of K\"ahler forms on possibly singular complex spaces, we refer the reader to,
for example, \cite[Section 2]{BL18}.
For a normal variety $X$ such that $X_{\reg}$ has a symplectic form $\sigma$, Beauville's condition above that the pullback of $\sigma$ to a resolution of $X$ extends as a regular $2$-form is in fact equivalent to having canonical, even rational singularities by \cite{Elk81}, \cite[Corollary~1.7]{KS21}. 
Let $X$ be a primitive symplectic variety. Then, there is a quadratic form $q_X$ on $H^2(X,\C)$, the so-called Beauville--Bogomolov--Fujiki form (\emph{BBF form} for short), see \cite[Definition~5.4]{BL18}. Up to scaling, it is defined by the formula
\begin{equation}\label{eq bbf}
q_X(\alpha):= \frac{n}{2} \int_X \left(\sigma\bar\sigma\right)^{n-1}\alpha^2 + (1-n) \int_X \sigma^n\bar\sigma^{n-1}\alpha \int_X \sigma^{n-1}\bar\sigma^n\alpha.
\end{equation}
Due to \cite{Nam01b, Kir15, Mat15, Sch20, BL18} we know that $q_X$ is defined over $\Z$ and nondegenerate of signature $(3,b_2(X)-3)$, see Section~5 of \cite{BL18} and references therein. It is used to formulate the local Torelli theorem \cite[Proposition~5.5]{BL18}, satisfies the Fujiki relations \cite[Proposition~5.15]{BL18}, and is compatible with the Hodge structure on $H^2(X,\Z)$.  Moreover, it allows to identify second degree homology with cohomology, more precisely:

\begin{definition}\label{definition dual class}
Let $X$ be a primitive symplectic variety. For $\alpha \in H^2(X,\Q)$, we define the \emph{dual class} $\alpha^\vee\in H^2(X,\Q)^\vee = H_2(X,\Q)$ by the condition
\[
q_X(\alpha,\beta) = \alpha^\vee(\beta).
\]
In the same way, we define $\gamma^\vee\in H^2(X,\Q)$ for a homology class $\gamma\in H_2(X,\Q)$. Clearly, $\alpha^{\vee\vee} = \alpha$.
\end{definition}

We end this section by recalling some relevant cones.
\begin{definition}
Let $X$ be a projective primitive symplectic variety. We define the following cones inside $\Pic(X)\otimes \R$:
\begin{itemize}
    \item The ample cone $\Amp(X)$ is the cone generated by ample classes.
    \item The positive cone $\Pos(X)$ is the connected component of the cone of BBF positive classes containing $\Amp(X).$
    \item The birational ample cone $\BAmp(X)$ is the union of all pullbacks $f^*\Amp(X')$, where $X'$ is a primitive symplectic variety locally trivially deformation equivalent to $X$, and $f\,:X\dashrightarrow X'$ is a birational map such that $f_*$ induces an isomorphism on $\Q$-Cartier divisors.
    \item The fundamental exceptional chamber $\FE{X}$ is the cone inside $\Pos(X)$ whose elements have positive intersection with {\it prime exceptional divisors}, i.e. with prime Cartier divisors which have negative square with respect to the BBF form.
\end{itemize}

\end{definition}
\begin{remark}\label{remark:cones}
Notice that we have $\Amp(X)\subset \BAmp(X) \subset \FE{X}\subset \Pos(X)$, where most inclusions are by definition, and $\BAmp(X) \subset \FE{X}$ follows because the birational maps we consider send prime exceptional divisors into prime exceptional ones. When $X$ has $\Q$-factorial and terminal singularities, by \cite[Proposition 5.8]{LMP22}, we have $\widebar{\BAmp(X)}=\widebar{\FE{X}}$.
\end{remark}

\section{Effective birationality}
To prove effective birationality, we want to bound denominators in Boucksom--Zariski decompositions of divisors, whose definition is recalled below. The quadratic form and the definition below can be given for $\Q$-Weil divisors, see \cite[Sections~2.3, 2.4, and 3]{KMPP19}. Since it will only be used in the $\Q$-factorial case, for simplicity we restrict ourselves to this case (to which one can always reduce in the projective case). 
\begin{definition}
  Let $D$ be a pseudo-effective $\Q$-divisor on a $\Q$-factorial primitive symplectic variety $X$. A Boucksom--Zariski decomposition of $D$ is a decomposition
  $$
 D= P(D) + N(D),
$$
where $P(D)$ and $N(D)$ are $\Q$-divisors, called respectively the positive and negative part of $D$, satisfying the following:
\begin{enumerate}
\item[1)] $P(D)$ is $q_{X}$-nef, i.e. $q_{X}(P(D),E)\geq 0$ for all effective  divisors;
\item[2)] if not empty, the divisor $N(D)$ is $q_{X}$-exceptional, i.e. it is effective and  the Gram matrix   $(q_{X}(N_i,N_j))_{i,j}$ of the irreducible components of the support of $N(D)$ 
is negative definite;
\item[3)] $q_{X}(P(D),N(D))=0$;
\end{enumerate}

\end{definition}
The existence and unicity of the Boucksom--Zariski decomposition
is proven in \cite{KMPP19} for effective divisors. Thanks to work of Denisi \cite{Den22} on the support of the negative part of the decomposition and to results contained in \cite{LMP22}, we can deduce the pseudo-effective case from the effective one.

\begin{theorem}
Let $X$ be a projective primitive symplectic variety with terminal and $\Q$-factorial singularities and let $D$ be a pseudo-effective divisor on $X$. Then, there is a unique Boucksom--Zariski decomposition $D=P+N$, and we have a surjection 
$$H^0(X,\mathcal O_X(kP))\twoheadrightarrow H^0(X,\mathcal O_X(kD)) $$ for all integers $k$ such that $kP$ and $kD$ are integral.
\end{theorem}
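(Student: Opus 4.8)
The plan is to reduce the pseudo-effective case to the effective case treated in \cite{KMPP19}, and then to establish the surjection on sections by a standard argument using the $q_X$-exceptionality of the negative part.

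\textbf{Step 1: Existence and uniqueness.} First I would recall that on a $\Q$-factorial projective primitive symplectic variety a pseudo-effective divisor $D$ lies in the closure of the effective cone, so some multiple is a limit of effective $\Q$-divisors. The key input is Denisi's result \cite{Den22}, which controls the support of the negative parts along such limits: the irreducible components that can appear in $N(D)$ are prime exceptional divisors (in particular, only finitely many of them can occur in a bounded region), with Gram matrix negative definite by the very definition of prime exceptional divisors together with the fact, recorded in \cite{LMP22}, that a negative-definite lattice spanned by prime exceptional classes stays negative definite. Concretely, one takes a sequence $D_i \to D$ of effective $\Q$-divisors, applies the effective Boucksom--Zariski decomposition from \cite{KMPP19} to each $D_i$, and passes to the limit: the positive parts $P(D_i)$ converge (they stay in the $q_X$-nef cone, which is closed) and the negative parts $N(D_i)$ converge because their supports are eventually contained in a fixed finite set of prime exceptional divisors and their coefficients are bounded. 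The limit decomposition $D = P + N$ satisfies (1), (2), (3) by continuity of $q_X$ and closedness of the relevant cones. Uniqueness follows exactly as in the effective case: if $D = P+N = P'+N'$ are two such decompositions, then $P-P' = N'-N$; pairing with $N'-N$ and using $q_X(P,N)=q_X(P',N')=0$, $q_X(P,N')\le 0$ wait---here one uses that $P$ is $q_X$-nef and $N'$ effective so $q_X(P,N')\ge 0$, similarly $q_X(P',N)\ge 0$, giving $q_X(N'-N,N'-N)\ge 0$; since $N'-N$ is supported on prime exceptional divisors, negative definiteness of the Gram matrix forces $N=N'$ and hence $P=P'$.

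\textbf{Step 2: Surjectivity on sections.} Fix $k$ with $kP$ and $kD=kP+kN$ integral. There is an obvious injection $H^0(X,\mathcal O_X(kP))\hookrightarrow H^0(X,\mathcal O_X(kD))$ given by multiplication with the canonical section of the effective divisor $kN$; I need that it is surjective, i.e. every section of $kD$ vanishes along $kN$. Suppose $s\in H^0(X,\mathcal O_X(kD))$ corresponds to an effective divisor $E\numeq kD = kP+kN$. Write $E = E' + \sum a_i N_i$ where the $N_i$ are the components of $\Supp(N)$ and $E'$ shares no component with $N$. I want $a_i \ge k n_i$ where $kN = \sum (kn_i) N_i$. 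The standard mechanism (as in the classical Zariski decomposition and its reflexive analogue) is: $E' + \sum(a_i - kn_i)N_i \numeq kP$, and intersecting with the $q_X$-dual classes $N_j^\vee$ and using that $kP$ is $q_X$-nef while $E'$ is effective with no component among the $N_j$ gives a system of inequalities; negative definiteness of $(q_X(N_i,N_j))$ then forces $a_i - kn_i \ge 0$ for all $i$. This is precisely the argument used to prove the analogous statement for effective divisors in \cite[Section 3]{KMPP19}, and it goes through verbatim once existence of the decomposition in the pseudo-effective case is in hand.

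\textbf{Main obstacle.} The genuinely new content, and the place where one must be careful, is Step 1: ensuring that the negative parts $N(D_i)$ of an approximating sequence do not run off to infinity, either in the number of components or in their coefficients. This is exactly what Denisi's description of the support of the negative part \cite{Den22} provides---without it, passing to the limit is not automatic---together with the fact from \cite{LMP22} that prime exceptional divisors on a $\Q$-factorial terminal primitive symplectic variety span a negative-definite sublattice of $\NS(X)_\R$ (so a bounded-volume divisor meets only finitely many of them). Once the limit decomposition exists and has the correct properties, uniqueness and the surjectivity statement are formal consequences of negative definiteness, as in the effective case.
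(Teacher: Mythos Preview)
Your approach is the same as the paper's: approximate $D$ by effective (the paper uses big) divisors, apply the effective case from \cite{KMPP19}, and pass to the limit using Denisi \cite{Den22}; uniqueness is Zariski's classical argument, and the section statement is the standard negative-definiteness computation, exactly as you outline.

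There is, however, a genuine imprecision in your Step~1 at the point you yourself flag as the main obstacle. Your justification for why the supports of the $N(D_i)$ stabilize---namely that ``prime exceptional divisors span a negative-definite sublattice of $\NS(X)_\R$'' so that ``a bounded-volume divisor meets only finitely many of them''---is not correct as stated: a $\Q$-factorial terminal primitive symplectic variety can carry infinitely many prime exceptional divisors (already K3 surfaces do), and they certainly do not all lie in a single negative-definite sublattice. What Denisi actually proves, and what the paper invokes, is a \emph{chamber decomposition} of the big cone in which the support of $N(\cdot)$ is constant on each chamber; the only input needed is that $q_X(F,F')\ge 0$ for distinct prime divisors $F,F'$, which holds by \cite[Theorem~3.11]{KMPP19}. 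One then chooses the approximating family $D_t$ to lie in a single chamber as $t\to 0$, so the support of $N_t$ is literally constant, not merely eventually contained in some finite set. This is the correct mechanism, and it replaces your finiteness claim.

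A second point: you do not say where the hypotheses ``terminal and $\Q$-factorial'' are used. The paper makes this explicit: after taking the limit, one knows $P=\lim P_t$ lies in the closure of the movable cone, and \cite[Proposition~5.8]{LMP22} (which requires terminality and $\Q$-factoriality) identifies this closure with the $q_X$-nef cone. Your argument that ``the $q_X$-nef cone is closed, so the limit is $q_X$-nef'' is also valid and arguably simpler, but you should at least note that $\Q$-factoriality is what makes the whole discussion of Weil divisors and their $q_X$-pairings well-posed.
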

\begin{proof}
Let $D$ be a pseudo-effective divisor on $X$. If it is effective, there is nothing to prove by \cite{KMPP19}. Suppose now that it is not effective. Take a family $D_t$ of big divisors with $\lim_{t\to 0} D_t = D$. Recall that the big cone is the interior of the pseudo-effective cone. Therefore by \cite{KMPP19}, we can write $D_t=P_t+N_t$ where $P_t$ and $N_t$ are the positive and negative part of the Boucksom--Zariski decomposition of $D_t$. It was proven in \cite[Section 4]{Den22}  that, when $X$ is a smooth IHS, the big cone can be decomposed in chambers in each of which the prime components of the support of the negative part of the Boucksom--Zariski decomposition are constant (Denisi proves more precisely that these chambers are locally rational polyhedral subcones, but we do not need this part). The only thing that is used in this argument is the fact that the Beauville--Bogomolov--Fujiki form is an intersection product, i.e. for any two distinct prime divisors $F,\,F'$ we have $q_X(F,F')\geq 0$. Since the latter remains true for primitive symplectic varieties by \cite[Theorem 3.11]{KMPP19}, Denisi's result holds in particular for $\Q$-factorial and terminal primitive symplectic varieties. Thus, without loss of generality, we can assume that for $t$ close to zero the support of the negative parts $N_t$ is constant, i.e. $N_t= \sum_{i=1}^r a_{i,t}N_i$.
Thus
there exists a limit $N:= \sum_{i=1}^r a_{i}N_i$ of the $N_t$, with $a_{i}:= \lim_{t\to 0} a_{i,t} $. We set $P:=D-N$, which is therefore a limit of the positive parts $P_t$.
By \cite[Proposition 5.8]{LMP22}, the limit $P$ of movable divisors $P_t$ is in the closure of the movable cone of $X$, which coincides with the $q_X$-nef cone (here we use the terminality and $\Q$-factoriality of $X$). So we have proven the existence of a Boucksom--Zariski decomposition for pseudo-effective divisors in this setting. For the unicity, it suffices to follow the original argument by Zariski \cite[Theorem 7.7, beginning of the proof]{Zar62}, which again only uses the fact that we have an intersection product and the Gram matrix generated by the prime components of the negative part is negative definite. 
The proof of the last statement is exactly as in \cite[Proposition 3.8]{KMPP19}, where the unicity is crucially used. 
\end{proof}

\begin{proposition}\label{prop:bound-denom}
Let X be a projective primitive symplectic variety of Picard number $\rho(X)$ with $\Q$-factorial and terminal singularities. Then denominators of the coefficients of the negative and positive parts of the Boucksom--Zariski
decompositions of all pseudo-effective Cartier divisors are  bounded by $(4\Card(A_X))^{\rho(X)-1} !$.  
\end{proposition}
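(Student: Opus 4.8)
The plan is to bound the denominators by tracking how the coefficients of the negative and positive parts are determined by a linear-algebra problem over the lattice $H^2(X,\Z)$, and then to estimate the discriminant of the relevant sublattices. First I would recall that, given a pseudo-effective Cartier divisor $D$ with Boucksom--Zariski decomposition $D = P + N$ and $N = \sum_{i=1}^r a_i N_i$, the coefficients $a_i$ are uniquely determined by the system of equations $q_X(P, N_j) = 0$ for $j = 1, \dots, r$, i.e. $q_X(D - \sum_i a_i N_i, N_j) = 0$, which rewrites as the linear system $\sum_i a_i\, q_X(N_i, N_j) = q_X(D, N_j)$. Since the Gram matrix $G := (q_X(N_i,N_j))_{i,j}$ is negative definite (in particular invertible), Cramer's rule gives $a_i = \det(G_i)/\det(G)$, where $G_i$ is $G$ with its $i$-th column replaced by the vector $(q_X(D,N_j))_j$. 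As all entries involved are integers (the $N_i$ and $D$ are Cartier, so their classes lie in $H^2(X,\Z)$ and $q_X$ is integral there), the denominators of the $a_i$ — and hence of the coefficients of $P = D - N$ as well — divide $\abs{\det G}$.

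Next I would bound $\abs{\det G}$ uniformly over all choices of prime exceptional divisors $N_1, \dots, N_r$ that can occur as the support of a negative part. The key point is that the classes $[N_i]$ lie in $\Pic(X) \hookrightarrow H^2(X,\Z)$, whose torsion-free part is a lattice of rank $\rho(X)$, and on which $q_X$ has discriminant dividing $\Card(A_X)$ (up to the standard factor coming from the relation between $\Pic(X)$ and $H^2(X,\Z)$); more precisely, by the definition of $A_X$ and the Fujiki-form normalization one gets that the discriminant of $\Pic(X)$ with respect to $q_X$ is bounded by $\Card(A_X)$ up to the controlled constant $4$, which is the source of the $4\Card(A_X)$ in the statement. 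Since $G$ is the Gram matrix of the $r \leq \rho(X)$ vectors $N_1,\dots,N_r$ (which we may assume linearly independent, as their Gram matrix is definite), $\abs{\det G}$ is a multiple of the discriminant of the saturated sublattice they span, and the multiplicity is at most the index squared. The cleanest route is to invoke Denisi's chamber decomposition (already used in the preceding theorem) to see that only finitely many configurations $\{N_i\}$ occur, but in fact a uniform bound is what is needed: one shows $\abs{\det G} \leq (4\Card(A_X))^{\rho(X)-1}$ by combining the bound on the discriminant of any rank-$(\rho(X)-1)$ (or smaller) primitive sublattice of $\Pic(X)$ with Hadamard-type control, noting that passing to any sublattice multiplies the discriminant by the index squared, which is itself controlled by $\Card(A_X)$. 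Then $(4\Card(A_X))^{\rho(X)-1}!$ — the factorial — is a crude but safe common denominator absorbing all these $\abs{\det G}$.

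The main obstacle, I expect, is the lattice-theoretic estimate: controlling $\abs{\det G}$ for an \emph{arbitrary} sub-configuration of prime exceptional divisors, rather than a single fixed primitive sublattice. A primitive symplectic variety's $\Pic(X)$ need not be unimodular, and the $N_i$ need not span a primitive sublattice, so the discriminant of the lattice they generate can exceed $\disc(\Pic(X))$ by the square of the index; one must argue that this index is itself bounded by a power of $\Card(A_X)$ (essentially because $\Pic(X)^\vee/\Pic(X)$ injects into $A_X$ after accounting for the factor $4$), so that the total bound stays within $(4\Card(A_X))^{\rho(X)-1}$. The secondary subtlety is making sure the constant $4$ and the exponent $\rho(X)-1$ (rather than $\rho(X)$) come out correctly: the exponent is $\rho(X)-1$ because a negative part, being $q_X$-exceptional, has support of rank at most $\rho(X)-1$ (its span cannot contain the positive cone direction), and the factor $4$ accounts for the discrepancy between the integral form $q_X$ and the actual intersection pairing on $\Pic(X)$ as recorded by the Fujiki relations. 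Once these two points are pinned down, Cramer's rule and the passage to the factorial as a universal denominator are entirely routine.
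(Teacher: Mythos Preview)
Your Cramer's rule step is fine and is indeed the skeleton of the argument: the denominators divide $\abs{\det G}$ for the Gram matrix $G$ of the prime exceptional components of the negative part. The gap is in your proposed bound on $\abs{\det G}$.

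The lattice-theoretic claims you make are incorrect. First, $\Card(A_X)=\abs{\disc H^2(X,\Z)}$, \emph{not} $\abs{\disc \Pic(X)}$; already for a K3 surface one has $\Card(A_X)=1$ while $\abs{\disc \Pic(X)}$ can be arbitrarily large, so ``the discriminant of $\Pic(X)$ is bounded by $\Card(A_X)$'' is false. Second, even granting a bound on $\disc\Pic(X)$, the discriminant of the (in general non-primitive) sublattice spanned by the $N_i$ equals the index squared times the discriminant of its saturation, and that index is \emph{not} controlled by $\Card(A_X)$ (take any large multiple of a primitive vector). So nothing purely lattice-theoretic bounds $\abs{\det G}$ in terms of $\Card(A_X)$ alone.

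What actually makes the bound work is a \emph{geometric} constraint on each individual prime exceptional divisor $E$, and this is exactly what the paper imports from \cite{KMPP19}, \cite{LMP21}, \cite{LMP22}. The paper verifies that on a $\Q$-factorial terminal primitive symplectic variety every prime exceptional $E$ is contractible on a locally trivially deformation equivalent model, the general contracted curve $\ell$ is a $\P^1$ or a chain of two $\P^1$'s, the class of $\ell$ equals $-2E^\vee/q(E)$, and $E$ or $E/2$ is primitive. Integrality of $\ell\in H_2(X,\Z)$ then forces $q(E)\mid 2\,\div(E)$ (or $q(E/2)\mid \div(E/2)$ in the second case), and since the divisibility of a primitive class divides $\Card(A_X)$ one gets $\abs{q(E)}\leq 4\Card(A_X)$ in either case. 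Hadamard's inequality for the negative definite Gram matrix then yields $\abs{\det G}\leq \prod_i \abs{q(N_i)}\leq (4\Card(A_X))^{r}\leq (4\Card(A_X))^{\rho(X)-1}$, after which the factorial serves as a universal common multiple. So the exponent $\rho(X)-1$ is as you say, but the factor $4$ and the appearance of $\Card(A_X)$ come from the divisibility bound on the self-intersection of a single prime exceptional divisor, not from any global discriminant of $\Pic(X)$; and this bound rests on the contractibility/primitivity results for prime exceptional divisors, which are the substantive inputs your proposal is missing.
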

\begin{proof}
We have to check the hypothesis of \cite[Corollary 4.11]{KMPP19}, that is, the following:
\begin{enumerate}
\item Any prime exceptional divisor in $X$ is contractible on a birational model which is locally trivially deformation equivalent.
\item Given a contraction of a prime exceptional divisor $E$, the general contracted curve $\ell$ is either a smooth $\mathbb{P}^1$ or the union of two smooth $\mathbb{P}^1$'s intersecting transversally in one point.
\item Given $\ell$ and $E$ as in the previous item, the class of $\ell$ is equal to $\frac{-2E^\vee}{q(E)}$, where $E^\vee$ is the dual class introduced in Definition \ref{definition dual class}.
\item For any prime exceptional divisor $E$, either $E$ or $E/2$ is primitive.
\end{enumerate}
From these items, the conclusion will follow immediately. Items (1), (2) and (4) are the content of \cite[Theorem 1.2, item (1)]{LMP21}. Notice that these items hold also without the terminality assumption on $X$. Item (3) is the content of \cite[Remark 3.11]{LMP22}, which holds also without the $\Q$-factoriality hypothesis. 
 
\end{proof}
\begin{remark}
The content of \cite[Remark 3.11]{LMP22} is a direct consequence of \cite[Theorem 3.10]{LMP22}, where it is proven that reflections along prime exceptional divisors are monodromy operators and, in particular, integral. However, there are counterexamples to the integrality of these reflections without the terminality assumption, already for K3 surfaces (see \cite[Example 3.12]{LMP22}, so it is not clear whether the third item above holds without this assumption.
\end{remark}

By the work of Perego and Rapagnetta \cite{peregorapagnetta20}, we can give an explicit bound for moduli spaces of sheaves on K3 or abelian surfaces: 

\begin{proposition}\label{prop:PR}
Let $S$ be a K3 (resp. abelian) surface and let  $v=mw$ be a Mukai vector
on it, with $w^2=2k$ such that $(S,v,H)$ is an $(m,k)$--triple in the sense of \cite[Definition 1.3]{peregorapagnetta20}. Let us consider the moduli space $X:=M_v(S)$ with respect to some $v$-generic polarization $H$ on $S$ (respectively $X=K_v(S)$ in the abelian case). Then $\Card(A_X)=2k$. In particular, denominators of the Zariski decomposition are bounded by $(8k)^{\rho(X)-1} !$. 
\end{proposition}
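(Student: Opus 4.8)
```latex
The plan is to deduce Proposition \ref{prop:PR} from Proposition \ref{prop:bound-denom}; the only genuine content is the computation $\Card(A_X) = 2k$, after which the denominator bound $(8k)^{\rho(X)-1}!$ is obtained by substituting $4\Card(A_X) = 8k$ into the bound of Proposition \ref{prop:bound-denom}. So the real work is to identify the discriminant group of the lattice $(H^2(X,\Z),q_X)$ when $X = M_v(S)$ (resp. $K_v(S)$) is a moduli space of sheaves associated to an $(m,k)$-triple.

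First I would recall from Perego--Rapagnetta \cite{peregorapagnetta20} the description of the second cohomology lattice of such moduli spaces. In the smooth case ($m=1$), O'Grady's and Yoshioka's work gives $H^2(M_v(S),\Z) \cong v^\perp \subset \widetilde H(S,\Z)$ with the Mukai pairing, and $v^\perp$ has the same discriminant as $\Z v$ inside the unimodular Mukai lattice, namely $v^2 = 2k$ (for the abelian case one takes the appropriate summand and accounts for the shift in dimension, but the discriminant is still governed by $w^2 = 2k$). For general $m$, the key input is that the $(m,k)$-triple condition is precisely engineered so that the lattice $(H^2(X,\Z),q_X)$ is isometric — up to the scaling ambiguity in \eqref{eq bbf}, which is fixed by the integrality/primitivity normalization of $q_X$ — to the lattice $w^\perp$ (resp. the relevant sublattice) with $w^2 = 2k$. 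I would cite the relevant statement in \cite{peregorapagnetta20} (the computation of the BBF lattice, including its discriminant form) for this identification.

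Next I would compute the discriminant group: since $w$ is primitive with $w^2 = 2k$ inside a unimodular lattice $\widetilde H(S,\Z)$ (resp. $\widetilde H(S,\Z)$ for abelian surfaces, which is again unimodular of the relevant rank), the orthogonal complement $w^\perp$ has discriminant group of order $|w^2| = 2k$, hence $\Card(A_X) = 2k$. One must check that the normalization of $q_X$ used throughout the paper (defined over $\Z$, primitive) matches the Mukai form on $w^\perp$ and not a rescaling; this is exactly the content of the Fujiki constant / normalization computations in \cite{peregorapagnetta20,KLS06}, so it is a citation rather than a new argument. Finally, plugging $\Card(A_X) = 2k$ into Proposition \ref{prop:bound-denom} gives the stated bound $(8k)^{\rho(X)-1}!$, since $X$ is known to have $\Q$-factorial terminal (in fact canonical, being a symplectic variety, and $\Q$-factorial by \cite{peregorapagnetta20,KLS06}) singularities, so the hypotheses of that proposition are met.

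The main obstacle I expect is purely bookkeeping: matching conventions between the Mukai lattice normalization in \cite{peregorapagnetta20} and the BBF normalization \eqref{eq bbf}, and correctly handling the abelian surface case, where $X = K_v(S)$ is the fiber of the Albanese--summation map and the second cohomology lattice differs from the naive $v^\perp$ by splitting off hyperbolic summands coming from $H^1(S)$; one has to verify the discriminant is still $2k$ rather than being altered by these extra unimodular summands. But all of this is already carried out in \cite{peregorapagnetta20}, so no new lattice theory is needed — the proposition is essentially an unwinding of their results combined with Proposition \ref{prop:bound-denom}.
```
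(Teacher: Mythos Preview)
Your proposal is correct and follows essentially the same route as the paper: invoke Perego--Rapagnetta (the paper cites \cite[Theorem~1.6]{peregorapagnetta20}) for the isometry $H^2(X,\Z)\cong v^\perp = w^\perp$, read off the discriminant $|w^2|=2k$, and substitute into Proposition~\ref{prop:bound-denom}. The paper's argument is terser---it observes that $w^\perp$ is the same lattice as the second cohomology of a Hilbert scheme of points on a $K3$, whose discriminant is the familiar $2k$---whereas you argue directly via unimodularity of the Mukai lattice and fret over normalization and the abelian case; these are exactly the bookkeeping points that the cited theorem already packages, so no additional work is needed.
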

\begin{proof}
By \cite[Theorem 1.6]{peregorapagnetta20}, there is an isometry $w^\perp = v^\perp\cong H^2(X,\Z)$. In particular, it follows that the second cohomology of $X$ is isometric (as a lattice) to the second cohomology of $S^{[k-1]}$. Therefore, the discriminant group is cyclic of order $2k$ and the claim follows.
\end{proof}

From Proposition \ref{prop:bound-denom}, along the same lines of \cite{KMPP19}, we generalize the result of effective birationality to the natural singular setting arising from the MMP:

\begin{proof}[Proof of Theorem \ref{thm:A}]
First, we restrict to the case of $\Q$-factorial and terminal singularities.
Let $L$ be a big line bundle on $X$. 
Consider the Boucksom--Zariski decomposition 
$$
a L = P + N,
$$
where $P\in \overline{\mathcal{FE}_X}$ is a Cartier divisor, $N$ a Cartier  $q_X$-exceptional divisor and 
$$ 
a  = (4\Card(A_X))^{\rho(X)-1} !
$$
is the integer given by Proposition \ref{prop:bound-denom} and clearing the denominators in the Boucksom--Zariski decompositions.
Since $L$ is big by hypothesis, the positive part $P$ is big. 
By Remark \ref{remark:cones}, we have that 
$\overline{\mathcal{FE}_X}$ coincides with the closure of the birational ample cone. 
Therefore, there exists a projective primitive symplectic variety $X'$ with $\Q$ factorial and terminal singularities and a birational map
$$
\phi :X\dashrightarrow X'
$$
such that 
$$
P':= f_* P
$$
is an integral, nef and big divisor on $X'$.  
Now we can apply Koll\'ar's extension \cite[Theorem 5.9]{Kol95} to nef and big divisors of the Angehrn--Siu  result \cite{AS95} to $P'$, 
to deduce that 
for all $m\geq (dimX +2)(dim X+3)/2$
the morphism associated to $|mP'|$ is injective on $X'$.
As a byproduct we then obtain that the linear system $|amL|$
separates two generic points on $X$.

Now we assume that  $X$ has arbitrary singularities and let $D$ be a big and nef divisor on it. As $X$ is projective, by \cite[Corollary 1.4.3]{BCHM10} there exists a terminal and $\Q$-factorial variety $Y$ and a map $\pi\,:\,Y \to X$ which is a partial resolution of singularities. We can apply the conclusion of the previous part of the proof to $Y$, so that the map associated to $m\pi^{*}(D)$ is birational onto its image, for all $m$ as in (\ref{eq:eff}). By definition, the map associated to $m\pi^{*}(D)$ factors through $\pi$, therefore our claim holds. 
\end{proof}
\begin{proof}[Proof of Corollary \ref{cor:moduli}]
The Corollary follows immediately from Theorem \ref{thm:A} and Proposition \ref{prop:PR}. 
\end{proof}

\section{Termination of flips}
In this section, we discuss termination of flips for primitive symplectic varieties with terminal hyperquotient singularities. We start by recalling standard definitions and two main conjectures  in the field. 

A {\it log pair} $(X,\Delta)$ consists of a normal variety $X$ and a $\mathbb R$-Weil divisor $\Delta\geq 0$ such that $K_X+\Delta$ is $\mathbb R$-Cartier. 
A {\it log resolution} of a log pair $(X,\Delta)$ is a projective birational morphism 
$\pi: \tilde X\to X$ such that $\tilde X$ is smooth and $\pi^* \Delta + Exc(\pi)$ has simple normal crossing support.
A birational morphism $f:\tilde X\to X$ between varieties for which $K_X$ and $K_{\tilde X}$ are well-defined is called {\it crepant} if $\pi^*K_X=K_{\tilde X}$.
A {\it crepant resolution} is a resolution of singularities which is also a crepant morphism. 

If $(X,\Delta)$ is a log pair and $\pi:\tilde X\to X$ is a log-resolution of $(X,\Delta)$, then we define the log discrepancy $a(E,X,\Delta)$ for a divisor $E$ over $X$ by the formula
\[
K_{\tilde X} + \tilde \Delta = \pi^* (K_{X} +  \Delta) + \sum_{E \subset \tilde X}  (a(E,X,\Delta)-1) E,
\]
where $\tilde \Delta$ is the strict transform of $\Delta$.

Let $c_X(E)\in X$ be the center of a divisor over $X$. This is a not necessarily closed point of $X$. The \emph{minimal log discrepancy} at $x\in X$ is
\[
\mld(x,X,\Delta):=\inf_{c_X(E)=x}a(E,X,\Delta)
\]
and the minimal log discrepancy along a subvariety $Z\subset X$ is
\[
\mld(Z,X,\Delta):=\inf_{x\in Z}\mld(x,X,\Delta).
\]

Notice that from the definition we have that 
\begin{equation}\label{eq:reverse}
Z\subset Z' \Rightarrow \mld(Z,X,\Delta) \geq \mld(Z',X,\Delta).
\end{equation}
We refer to \cite[\S 1]{Am99} for more details.


Ambro and Shokurov have made the two following conjectures about mlds in \cite{Am99,ShoV}. The importance of these conjectures is that if they are fulfilled, then log-flips terminate by the main theorem of \cite{ShoV}.
\begin{conjecture}\label{hypo acc}
{\bf{(ACC)}} Let $\Gamma\subset [0,1]$ be a DCC-set, {\it i.e.,}  all decreasing sequences in $\Gamma$ become eventually constant. For a fixed integer $k$ the set
$$
\Omega_k:= \left\{ \mld(Z,X,\Delta)\middle|
\begin{array}{l}
\dim X = k,\\ 
(X,\Delta) \textrm{ log pair}\\
Z\subset X \textrm{ closed subvariety}\\
\coeff(\Delta)\in \Gamma
\end{array}
\right\}
$$
is an ACC-set, that is, every increasing sequence $\alpha_1 \leq \alpha_2 \leq \ldots$ in $\Omega_k$ eventually becomes stationary.
\end{conjecture}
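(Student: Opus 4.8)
This is one of the central open problems of the minimal model program, so what follows is a strategy rather than a complete argument. The plan is to reduce ACC for mlds to a \emph{boundedness} statement about the singularities that can occur, and then run a compactness argument against lower semicontinuity of the mld.

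First I would reduce to the case where $Z$ is a closed point: one has $\mld(Z,X,\Delta)=\mld(\eta_Z,X,\Delta)$ for the generic point $\eta_Z$ of $Z$, and replacing $X$ by a local model around $\eta_Z$ lowers the dimension (so the bound on $k$ is preserved) while keeping $\coeff(\Delta)\in\Gamma$; passing to the completion $\widehat{\mathcal O}_{X,x}$ one then works with germs. The statement to prove becomes: the set of $\mld(x,X,\Delta)$ over all lc germs of dimension $\le k$ with $\coeff(\Delta)\in\Gamma$ is an ACC set.

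The key input is a boundedness of the relevant germs: for every $\veps>0$ the germs $(X\ni x,\Delta)$ of dimension $\le k$ with $\coeff(\Delta)\in\Gamma$ and $\mld(x,X,\Delta)\ge\veps$ should form a log bounded family (in the hyperquotient situation of Theorem~\ref{thm:termination}, a bounded family of finite quotients of bounded smooth germs, matching the framework of \cite{NS20}). Granting this, suppose for contradiction that $\alpha_1<\alpha_2<\cdots$ is a strictly increasing sequence in $\Omega_k$ with limit $\alpha$. Applying boundedness with $\veps=\alpha_1$, all the witnessing germs fit into finitely many families, so after passing to a subsequence they all occur as fibres $\mathcal X_{t_i}$ of a single family $\mathcal X\to T$ with boundary $\mathcal D$. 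Pick a limit point $t_\infty$ of $\{t_i\}$. The geometric heart of the proof is that, after a uniform (``generic limit''/ultrafilter) extraction of the divisors computing $\mld(t_i)$, one produces a divisor over $\mathcal X_{t_\infty}$ whose log discrepancy is exactly $\alpha=\sup_i\alpha_i$; together with $\mld(\cdot,\mathcal X_{t_\infty},\mathcal D_{t_\infty})\ge\alpha$, which follows from lower semicontinuity of the mld in families (exactly the statement extended to hyperquotient singularities in \cite{NS20}), this contradicts the strict monotonicity of the sequence.

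\textbf{Main obstacle.} The boundedness step is the genuine difficulty and is open in general --- it has essentially the same strength as ACC itself. It holds unconditionally in dimension $\le 3$ (Kawakita), and, when the mld is bounded away from $0$, in wide generality via Birkar's boundedness of complements together with the work of Han--Liu--Qi--Shokurov on mlds; the passage to hyperquotient singularities (needed to feed Theorem~\ref{thm:termination} into \cite{ShoV}) is exactly why the equivariant lower semicontinuity of \cite{NS20} is the right black box to isolate. A secondary, more technical obstruction is the \emph{uniform} extraction of the limiting divisor over the family, which requires constructibility of the mld in families.
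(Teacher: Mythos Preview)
The statement you were asked to prove is labeled \texttt{conjecture} in the paper, not \texttt{theorem}: the ACC for minimal log discrepancies is recalled there as one of the two Ambro--Shokurov conjectures whose validity, together with LSC, would imply termination via Shokurov's main theorem. The paper does not give (and does not claim to give) a proof of it; in the proof of Theorem~\ref{thm:termination} the authors sidestep the full ACC by invoking Kawakita's finiteness of mlds on a fixed projective variety with a fixed finite coefficient set, combined with the fact that along a sequence of flips all the varieties are locally trivially deformation equivalent (\cite[Theorem~6.16]{BL21}). So there is no ``paper's own proof'' to compare your proposal against.

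That said, your write-up is an honest and reasonably informed sketch of the folklore strategy, and you are right to flag that the boundedness step is essentially equivalent in difficulty to the conjecture itself. Two small corrections: the reduction $\mld(Z,X,\Delta)=\mld(\eta_Z,X,\Delta)$ is not quite the definition used here (the paper takes the infimum over points of $Z$, cf.\ \eqref{eq:reverse}), so the passage to germs needs a little more care; and the ``generic limit'' extraction of a divisor realizing $\alpha$ does not follow from lower semicontinuity alone --- LSC only gives $\mld(t_\infty)\le\liminf\mld(t_i)$ (with the inequality going the wrong way for your contradiction), so what you actually need is an upper bound on $\mld(t_\infty)$, which is precisely the hard part of the constructibility/uniform extraction you mention.
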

\begin{conjecture}\label{hypo lsc}
{\bf{(LSC)}} Let $X$ be a normal $\Q$-Gorenstein variety and let $\Delta$ be an $\R$-Weil divisor on $X$ such that $K_X+\Delta$ is $\R$-Cartier. Then for each $d$ the function $\mld_{(X,\Delta)}: X^{(d)} \to \R \cup \{-\infty\}$ is lower semi-continuous.
\end{conjecture}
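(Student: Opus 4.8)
The plan is to approach Conjecture~\ref{hypo lsc} through the space of formal arcs of $X$, following the circle of ideas of Denef--Loeser, Mustata, and Ein--Mustata--Yasuda, rather than through any direct estimate on resolutions. The starting point is the interpretation of minimal log discrepancies as codimensions of \emph{contact loci} in the arc space $J_\infty X$: when $X$ is smooth and $\Delta$ an $\R$-divisor, $\mld(x,X,\Delta)$ is computed as an infimum, over the possible orders of contact along the components of $\Delta$, of the codimensions of the corresponding cylinders in $J_\infty X$ intersected with the fibre over $x$. First I would reproduce this formula in the normalisation used here, so that the monotonicity \eqref{eq:reverse} becomes its crude shadow. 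With the formula in hand, lower semicontinuity of $x\mapsto \mld(x,X,\Delta)$ reduces to the \emph{upper} semicontinuity of the dimensions of the fibres of the truncation maps $J_m X\to X$: as $x$ specialises to a smaller subvariety the relevant jet loci can only grow, their codimensions can only drop, and the infimum computing the mld therefore varies in exactly the direction prescribed by \eqref{eq:reverse}. The value $-\infty$, marking non-log-canonical points, is included automatically, being itself cut out by a codimension condition on cylinders. For smooth $X$ this is essentially the theorem of Ein--Mustata--Yasuda.

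The second step is to descend from the smooth case to the singular $\Q$-Gorenstein $X$ that we care about. If $X$ is a local complete intersection in a smooth ambient space, I would transport the smooth computation by the change-of-variables formula for arc spaces together with precise inversion of adjunction, as in the work of Ein--Mustata; the codimension count upstairs descends, and the semicontinuity argument survives. If instead $X=V/G$ is the quotient of a smooth $V$ by a finite group, I would pass to $G$-equivariant arcs on $V$ and compare the $G$-stratified arc space of $V$ with $J_\infty X$ in the spirit of Denef--Loeser's equivariant motivic integration, so that the fibre-dimension semicontinuity on $V$ again yields the statement on $X$. For a \emph{hyperquotient} singularity, a complete intersection inside such a quotient, the two mechanisms combine; this is precisely the analysis of Nakamura--Shibata, so in that case Conjecture~\ref{hypo lsc} is a theorem, cf.~\cite[Theorem~1.2]{NS20}. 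This is exactly the class of singularities needed to run Shokurov's criterion together with Conjecture~\ref{hypo acc}.

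The hard part is the fully general case, and I do not expect the above to reach it. The arc-theoretic formula that makes semicontinuity transparent is available only when one controls the codimensions of contact loci concretely --- that is, when $X$ is smooth, lci, or (hyper)quotient --- because only then do the dimensions of $J_m X$ over a singular point of $X$ reflect discrepancies in a usable way. For an arbitrary normal $\Q$-Gorenstein $X$ no such formula is known, the jet schemes over singular points behave erratically, and Conjecture~\ref{hypo lsc} remains open; this is the genuine obstacle. Accordingly the realistic target is to establish LSC within the class at hand --- terminal hyperquotient, and in particular the quotient and lci cases --- where the arc-space method is effective, isolating the general $\Q$-Gorenstein statement as the still-conjectural residue.
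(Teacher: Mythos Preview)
The statement you are addressing is labelled as a \emph{conjecture} in the paper, and the paper does not attempt to prove it. Conjecture~\ref{hypo lsc} is recorded, together with the ACC Conjecture~\ref{hypo acc}, purely to recall Shokurov's strategy: if both hold along the varieties appearing in a given MMP, then that MMP terminates. The only contact the paper makes with the content of Conjecture~\ref{hypo lsc} is in the proof of Theorem~\ref{thm:termination}, where it \emph{invokes} the Nakamura--Shibata result \cite[Theorem~6.2]{NS20} for hyperquotient singularities and then transfers LSC from a $\Q$-factorial terminalization $Y$ down to $Z$ via \cite[Theorem~3.8]{LP16}. There is no proof of Conjecture~\ref{hypo lsc} in the paper to compare your proposal against.

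Your write-up is therefore not really a proof proposal but a survey of the known cases of LSC via arc spaces, and as such it is accurate: the smooth case is Ein--Musta\c{t}\u{a}--Yasuda, the lci case is Ein--Musta\c{t}\u{a}, and the hyperquotient case is Nakamura--Shibata. You also correctly flag that the general $\Q$-Gorenstein case is genuinely open and that the arc-space formula for mlds is not available there. That diagnosis matches the state of the art and is consistent with how the paper uses the conjecture. The only adjustment I would suggest is framing: do not present this as a proof of Conjecture~\ref{hypo lsc}, but rather as a confirmation that the conjecture holds in the hyperquotient setting relevant to Theorem~\ref{thm:termination}, which is exactly what the paper needs and exactly what it cites.
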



\begin{lemma}\label{lemma deformation of hyperquotient}
Let $X$ be a projective variety with terminal hyperquotient singularities, and let $Y$ be a locally trivial deformation of $X$. Then $Y$ has hyperquotient singularities.
\end{lemma}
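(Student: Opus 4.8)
The plan is to reduce the assertion to the purely local fact that a locally trivial deformation leaves the analytic isomorphism types of the singularity germs unchanged, together with the observation that ``hyperquotient'' is a condition on those germs.

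First I would unwind the hypothesis. Being a locally trivial deformation of $X$ means that there is a proper flat morphism $f\colon\mathcal{X}\to T$ over a connected base, together with points $0,t_{0}\in T$ and isomorphisms $\mathcal{X}_{0}\cong X$ and $\mathcal{X}_{t_{0}}\cong Y$, such that every $p\in\mathcal{X}$ admits analytic neighbourhoods $p\in\mathcal{U}\subset\mathcal{X}$, $f(p)\in S\subset T$, $p\in U\subset\mathcal{X}_{f(p)}$ and an isomorphism $\mathcal{U}\cong U\times S$ over $S$ (for a deformation equivalence one chains finitely many such families). The first step is to record the standard consequence: for every $y\in Y$ there is an $x\in X$ with $(Y,y)\cong(X,x)$ as analytic germs. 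Indeed, if $y$ lies in a product chart $U\times S$ then the germ $(\mathcal{X}_{t},y)$ is the germ of $U\times\{t\}$ at $y$, hence isomorphic to a germ of the fibre $\mathcal{X}_{f(p)}$; moreover, for a fixed germ type, the set of points of $T$ whose fibre realises it is open (by local triviality) and closed (by properness of $f$, which lets one extract a convergent subsequence of points realising the type, and local triviality at the limit point), hence clopen, hence --- $T$ being connected --- every germ type realised in $Y$ is already realised in $X$. Alternatively one may simply quote this from the foundational literature on locally trivial deformations.

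Second, I would invoke the definition of hyperquotient singularities in the form recorded in \cite[Theorem~1.2]{NS20}: $X$ has hyperquotient singularities exactly when for every point $x\in X$ the analytic germ $(X,x)$ is the germ of a complete intersection cut out by $G$-invariant functions inside a finite quotient $(\C^{N},0)/G$ of a smooth germ. By hypothesis $X$ has terminal hyperquotient singularities, so each $(X,x)$ is of this shape; by the first step each $(Y,y)$ is isomorphic to some $(X,x)$, hence also of this shape, i.e.\ $Y$ has hyperquotient singularities. The same reasoning applied to log discrepancies, which depend only on the analytic germ, shows in addition that $Y$ is terminal, although the statement only requires the hyperquotient property.

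The one step that deserves care rather than being dismissed as obvious is the first one --- that local triviality forces every fibre, not merely those infinitesimally near $X$, to have its germs among the germs of $X$; this is precisely where properness of $f$ (available because $X$ is projective) and connectedness of $T$ enter. Everything else is a direct translation of definitions, and the terminality hypothesis is not needed for the hyperquotient conclusion itself.
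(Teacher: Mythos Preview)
Your argument is correct and matches the paper's: both reduce to the observation that ``hyperquotient'' is a condition on analytic germs, and locally trivial deformations leave these germs unchanged. The only difference is emphasis --- you spell out the open--closed argument for why every germ of $Y$ already occurs in $X$, whereas the paper spends its paragraph on why the hyperquotient condition passes to the analytic completion and then asserts invariance under locally trivial deformation in a single clause.
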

\begin{proof}
By hypothesis, for any point $p\in X$, the local ring at $p$ is a finite quotient of a complete intersection local ring $R$. That means that $R$ is the quotient of a regular local ring of dimension $\dim X + k$ by $k$ elements. The same holds for its analytic completion, as the $k$ functions still generate the local ideal of $\widehat{\mathcal{O}}_{X,p}$, and their number cannot drop by dimensional reasons. Hence, the property of having hyperquotient singularities is invariant under analytically locally trivial deformations.
\end{proof}

\begin{proof}[Proof of Theorem \ref{thm:termination}]
We argue as in the proof of \cite[Theorem 1.2]{LP16} and rely on Shokurov's strategy \cite[Theorem]{ShoV} to prove termination. We refer the reader to \cite{LP16} for all the relevant definitions and further details. 

Let $Z$ be a variety appearing along a log-MMP for  $(X,\Delta)$. Let $Y$ be any $\Q$-factorial terminalization of $Z$. Since $Y$ is birational to $X$, $Y$ is locally trivially deformation equivalent to $X$ by \cite[Theorem 6.16]{BL21}. Thus, $Y$ has hyperquotient singularities by Lemma \ref{lemma deformation of hyperquotient}. Therefore, by \cite[Theorem 6.2]{NS20} the lower semi-continuity of the minimal log-discrepancy function holds on $Y$. 
By \cite[Theorem~3.8]{LP16}, the mld function on $Z$ is lower semi-continuous. 

By a theorem of Kawakita \cite{Kaw12}, the set of
all mlds for a fixed finite set of coefficients on a fixed projective variety is
finite. As in the course of the MMP, only a finite number of divisorial contractions can occur, we only have to care about the ACC condition along a sequence of flips.  By \cite[Theorem 6.16]{BL21}, any two $\Q$-factorial primitive  symplectic varieties birational in codimension one are 
locally trivially deformation equivalent. Therefore, the set of ACC condition is also verified. 

We conclude the termination of log-flips by  the main
theorem of \cite{ShoV}.
\end{proof}

\section{On the geometry of wall divisors}\label{wall}

In this section, we look at the shape of the birational ample cone for primitive symplectic varieties. To this end, we introduce a generalization of the definition of wall divisor contained in \cite{LMP22}, and we prove that they are related to extremal contractions.

\begin{definition}\label{definition wall divisor}
Let $X$ be a projective primitive symplectic variety and let $D\in \Pic(X)$. The divisor $D$ is called a wall divisor if the following three conditions hold:
\begin{enumerate}
    \item Negativity: $q_X(D)<0$
    \item Ample-non-orthogonality: for all $\varphi\in \Mth(X)$ we have $\varphi(D)^\perp \cap \BAmp(X)=\emptyset$.
    \item Semiample-orthogonality: there exists $\varphi\in \Mth(X)$ such that $\varphi(D)^\perp \cap \widebar{\BAmp}(X)\neq 0$, where the closure is taken in $\Pos(X)$.
\end{enumerate}
\end{definition}

\begin{remark}\label{rmk:third condition}
When $X$ is projective with $b_2\geq 5$ and has terminal and $\Q$-factorial singularities, the third condition is a trivial consequence of the proof of the Morrison--Kawamata birational cone conjecture (see \cite[Theorem~5.12]{LMP22}), as $D^\perp\cap \Pos(X)\neq 0$ due to the signature of the quadratic form on $\Pic(X)$, so by the transitive action of $\Mth(X)$ on the exceptional chambers of $\Pos(X)$,  there exists an element $\varphi\in \Mth(X)$ such that $\varphi(D)^\perp$ intersects the closure of the fundamental exceptional chamber, which then coincides with $\widebar{\BAmp}(X)$, by \cite[Proposition 5.8]{LMP22}.
This third condition will allow us to prove that wall divisors are parallel transport invariant using a strategy similar to \cite{BHT}. Without it, another way of proving  their parallel transport invariance could be via the use of twistor lines, which are not known to exist in general. This approach successfully used for instance in the orbifold case by Menet and Rie\ss$\,$  \cite{menetriess}.
\end{remark}

\begin{proposition}\label{prop:contracted are wall}
Let $X$ be a primitive symplectic variety and let $\pi:\, X\to Z $ be a relative Picard rank one contraction. Let $C$ be a contracted curve. Then the dual class $D$ to $C$ deforms to a wall divisor on all projective nearby deformations of $(X,D)$.
\end{proposition}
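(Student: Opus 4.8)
The plan is to show first that the primitive integral class $D$ on the ray $\R_{>0}\,C^{\vee}$ is a wall divisor on $X$ in the sense of Definition~\ref{definition wall divisor}, and then that this property is inherited by every nearby projective member of the pair $(X,D)$ in its locally trivial deformations. Since all three conditions only see $D$ up to positive scaling and are phrased in terms of $q_X$, $\Pos(X)$, $\BAmp(X)$ and $\Mth(X)$, I would begin by reducing to the case where $X$ has $\Q$-factorial terminal singularities: a $\Q$-factorial terminalization $\rho\colon Y\to X$ exists by \cite[Corollary~1.4.3]{BCHM10}, it is crepant, the contraction $\pi$ and a contracted curve $C$ lift to a relative Picard rank one contraction $\tilde\pi\colon Y\to Z$ with contracted curve $\tilde C$, and, the cone $\overline{\BAmp}$ and the monodromy group being controlled by the birational geometry and second cohomology shared by $X$ and $Y$ (cf.\ \cite[Proposition~5.8]{LMP22}, \cite[Theorem~6.16]{BL21}), it suffices to prove that $\tilde C^{\vee}$ spans a wall divisor on $Y$. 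Splitting off the very restrictive cases $b_2\leq 4$, which can be handled directly, I may also assume $b_2(Y)\geq 5$.

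On such a $Y$, the statement that $\tilde D:=$ the primitive class on $\R_{>0}\tilde C^{\vee}$ is a wall divisor is the ``converse'' assertion of \cite[Proposition~7.4]{LMP22}, which I would re-derive directly. Negativity $q_Y(\tilde D)<0$ holds because in the divisorial case $\tilde C$ is proportional to $\frac{-2E^{\vee}}{q_Y(E)}$ for the contracted prime exceptional divisor $E$ by \cite[Theorem~1.2]{LMP21} and \cite[Remark~3.11]{LMP22}, so that $\tilde D$ lies on the ray $\R_{>0}E$ with $q_Y(\tilde D)<0$, while in the small (flopping) case one argues identically using the analogous description of contracted curves of small symplectic contractions. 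Next, since $\rho(Z)=\rho(Y)-1$ and $\tilde C$ spans the corresponding extremal ray, $\tilde D^{\perp}=\tilde C^{\perp}$ meets $\overline{\Amp(Y)}\subseteq\overline{\BAmp(Y)}$ exactly in the codimension one face $\tilde\pi^{*}\overline{\Amp(Z)}$, so $\tilde D^{\perp}$ is a genuine wall of the wall-and-chamber decomposition of $\Pos(Y)$ supplied by the Morrison--Kawamata cone conjecture \cite[Theorem~5.12]{LMP22}. Semiample-orthogonality is then immediate with $\varphi=\id$ — the face lies in $\tilde D^{\perp}\cap\overline{\BAmp(Y)}$, cf.\ Remark~\ref{rmk:third condition} — and Ample-non-orthogonality holds because for every $\varphi\in\Mth(Y)$ the hyperplane $\varphi(\tilde D)^{\perp}$ is again a wall of that same decomposition, hence misses the interior of every chamber and in particular the open cone $\BAmp(Y)$.

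Finally, for the deformation step I would use that, for a class of negative BBF square, being a wall divisor is an invariant of the locally trivial deformation class: the locally trivial monodromy group is deformation invariant and the wall-and-chamber structure of the positive cone moves continuously, so a class whose orthogonal supports a wall on one member continues to do so on every member on which it stays of type $(1,1)$. Granting this, if $(\mathcal X_t,D_t)$ is a projective member of a locally trivial deformation of $(X,D)$, one chooses a $\Q$-factorial terminalization $Y_t\to\mathcal X_t$ — a locally trivial deformation of $Y$ by \cite[Theorem~6.16]{BL21} — and concludes that the transported class is a wall divisor on $Y_t$ since $\tilde D$ is one on $Y=Y_0$, whence $D_t$ is a wall divisor on $\mathcal X_t$. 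I expect this invariance statement to be the main obstacle: it is the analogue for primitive symplectic varieties of the deformation-and-monodromy invariance of MBM classes of \cite{BHT, AV17}, it must be proved \emph{without} invoking the full parallel transport invariance of Theorem~\ref{thm:walls} (which is downstream of this Proposition), and doing so seems to require combining \cite[Theorem~5.12]{LMP22} with the global Torelli theorem for primitive symplectic varieties \cite{BL21}. The two secondary points still needing care are the description of the contracted curve in the small case and the disposal of the low $b_2$ exceptions.
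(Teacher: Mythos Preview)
Your route is substantially different from the paper's, and the obstacle you flag at the end is precisely what the paper's argument avoids.

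The paper does \emph{not} pass to a $\Q$-factorial terminalization, does not invoke the Morrison--Kawamata wall-and-chamber decomposition, and does not need $b_2\geq 5$. Instead, it argues directly on $X$ using a single geometric input: by \cite[Corollary~2.14]{LMP22}, a contracted curve $C$ deforms (as an effective curve) along its entire Hodge locus in $\Def^{\lt}(X)$. Since every $\varphi\in\Mth(X)$ and every birational identification $f_*$ are Hodge isometries, the class $\pm f_*\varphi([C])$ lies again in the Hodge locus of $[C]$ (cf.\ \cite[Lemma~5.17(2)]{Mar13}), so a positive multiple of it is represented by an effective curve on the corresponding birational model. An effective curve cannot be orthogonal to an ample class, which gives ample-non-orthogonality immediately. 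Semiample-orthogonality is just $\pi^*\Amp(Z)\subset C^\perp\cap\overline{\BAmp}(X)$, and negativity is the elementary Lorentzian fact that $C^\vee$ is orthogonal to the big class $\pi^*H$. Crucially, the same argument applies verbatim on any nearby projective $(X_t,D_t)$ in the Hodge locus of $C$, because the curve has already been deformed there; no separate ``wall divisors are deformation invariant'' lemma is needed.

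By contrast, your scheme first proves that $\tilde D$ is a wall divisor on the terminalization $Y$ using \cite[Theorem~5.12]{LMP22}, and then has to transport this to every nearby $Y_t$. That transport step \emph{is} essentially Proposition~\ref{prop:walls_deform}, and while in the paper's logical order that proposition is in fact independent of Proposition~\ref{prop:contracted are wall} (it rests on Lemmas~\ref{lem:wall_are_extremal} and~\ref{lem:wall_are_deformable}, which use only the definition and again \cite[Corollary~2.14]{LMP22}), you have not supplied it, and your proposed substitute---``walls move continuously, so a class supporting a wall on one fibre does so on all''---is not an argument: nothing a priori prevents a wall from disappearing under deformation. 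Your case split for negativity (divisorial versus small, with explicit curve formulas) and the disposal of $b_2\leq 4$ are likewise unnecessary detours. The missing idea is simply to use the deformation of the \emph{curve} rather than the deformation of the wall structure.
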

\begin{proof}
Without loss of generality, up to moving inside the Hodge locus of $C$, we can directly assume that $X$ is projective, as projective deformations are dense in any family which is non-trivial in moduli by \cite[Corollary 6.10]{BL18}. As the curve $C$ is contracted by $\pi$, we authomatically have that $C$ is orthogonal to $\pi^{*}(\Amp(Z))\subset\widebar{\BAmp(X)}$, thus the third condition of Definition \ref{definition wall divisor} is satisfied.
Let $\varphi\in\Mth(X)$. Since $\varphi$ is a Hodge isometry,  $[\varphi(C)]$ cannot be orthogonal to $\BAmp(X)$. To see this, suppose by contradiction that $\varphi(C)^\perp\cap \Amp(X')\not= \emptyset$, for some birational model $f:X\dashrightarrow X'$. Since $f_*$ is a Hodge isometry, by \cite[Lemma 5.17, item (2)]{Mar13}, either $f_*[\varphi(C)]$ or $-f_*[\varphi(C)]$ is a deformation of $[C]$ which remains of type $(2n-1,2n-1)$, i.e. it lies in the Hodge locus of $[C]$. As by \cite[Corollary 2.14]{LMP22}, the curve $C$ deforms in its Hodge locus, a multiple of $f_*[\varphi(C)]$ is effective and therefore cannot be orthogonal to $\Amp(X')$.
As $D$ is dual to $C$, the same holds for $\varphi(D)$ and the claim is proven.
\end{proof}

\begin{lemma}\label{lem:wall_are_extremal}
Let $X$ be a projective primitive symplectic variety and let $D$ be a wall divisor. Then, there exists $\varphi\in\Mth(X)$ and a birational model $X'$ such that $\varphi(D)$ is dual to an extremal curve on $X'$.
\end{lemma}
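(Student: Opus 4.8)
The plan is to follow the strategy of \cite{BHT}: we show that $D^\perp$ is, after applying a suitable $\varphi\in\Mth(X)$ and passing to a birational model, a codimension-one wall of the chamber decomposition of $\widebar{\BAmp}(X)$, and then read off the extremal curve from the corresponding relative Picard rank one contraction. First we record the reductions at our disposal: for any birational map $g\colon X\dashrightarrow X'$ inducing an isomorphism on $\Q$-Cartier divisors, $g_*$ is a parallel transport operator and conjugates $(\Mth(X),\BAmp(X))$ onto $(\Mth(X'),\BAmp(X'))$, so both the property of being a wall divisor and the conclusion of the lemma are unaffected if we replace $(X,D)$ by $(X',g_*D)$ or $D$ by $\varphi(D)$, $\varphi\in\Mth(X)$. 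Invoking condition~(3) of Definition~\ref{definition wall divisor}, after such a $\varphi$ we may assume there is a nonzero class $\alpha\in D^\perp\cap\widebar{\BAmp}(X)$ (closure taken in $\Pos(X)$); by condition~(2) with $\varphi=\id$ we then have $\alpha\in\widebar{\BAmp}(X)\setminus\BAmp(X)$.

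The heart of the proof is to locate $D^\perp$ as a wall. Recall that $\Pos(X)$ carries a chamber decomposition into the ample cones of the birational models and their $\Mth(X)$-translates, which is locally rational polyhedral inside $\Pos(X)$, with every codimension-one wall of the form $\gamma^\perp$ for some $\gamma$ with $q_X(\gamma)<0$ --- namely $\gamma$ a prime exceptional divisor, respectively a flopping-wall class, on the appropriate model (this is the substance of the Morrison--Kawamata cone conjecture in the cases where it is known; cf.\ \cite[Theorem~5.12]{LMP22}). Condition~(2), applied now to \emph{every} element of $\Mth(X)$, says that $D^\perp$ and all its monodromy translates avoid the interiors of all chambers; since $q_X(D)<0$ the class $D$ is not a multiple of an isotropic one, so $D^\perp$ is not tangent to the null cone $\{q_X=0\}$; combining these facts with condition~(3) and the local polyhedrality, one shows that a hyperplane meeting $\widebar{\BAmp}(X)$ but disjoint from the interiors of all (countably many) polyhedral chambers must contain one of the (countably many) wall hyperplanes, hence equal it. Thus $D^\perp=\gamma^\perp$ for a wall class $\gamma$, so $D=c\gamma$ with $c\in\Q^\times$ (both lie in $\Pic(X)$), and after a final birational modification we may assume $\gamma$ is a prime exceptional divisor on $X$, or a flopping-wall class on $X$.

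It remains to extract the curve. In either case $\gamma$ determines a relative Picard rank one contraction $\pi\colon X\to Z$ --- the divisorial contraction of the prime exceptional divisor $\gamma$ (\cite[Theorem~1.2]{LMP21}, together with item~(3) in the proof of Proposition~\ref{prop:bound-denom}, which identifies the contracted curve), or the flopping contraction attached to the wall $\gamma^\perp$ --- whose contracted curves $C$ span a single extremal ray of the cone of curves $\overline{\mathrm{NE}}(X)$ and satisfy $C^\perp=\gamma^\perp=D^\perp$ in $\Pic(X)\otimes\R$. Consequently $[C]\in\R_{>0}\,D^\vee$, and unwinding the reductions we obtain $\varphi\in\Mth(X)$ and a birational model $X'$ on which $\varphi(D)$, transported along the modification, is dual to the extremal curve $C$, as required. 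I expect the genuine obstacle to be the middle step, i.e.\ verifying that the contact of the $\Mth(X)$-orbit of $D^\perp$ with $\widebar{\BAmp}(X)$ is honestly codimension one --- not confined to a higher-codimension stratum of the chamber structure nor to the $q_X=0$ boundary --- which is exactly what forces one to carry along the three conditions of Definition~\ref{definition wall divisor} (in the $\Q$-factorial terminal case of \cite[Proposition~7.4]{LMP22} this could be bypassed using integrality of the reflection $R_D$, but that fails in general by \cite[Example~3.12]{LMP22}).
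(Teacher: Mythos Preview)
Your approach takes a detour through machinery that is not available in the generality of the lemma. You invoke a chamber decomposition of all of $\Pos(X)$ by monodromy translates of ample cones of birational models and cite \cite[Theorem~5.12]{LMP22}; but that theorem, and with it the Morrison--Kawamata picture you rely on, is established only for $\Q$-factorial primitive symplectic varieties with terminal singularities and $b_2\geq 5$. The present lemma is deliberately stated without those hypotheses --- this is precisely the point of Section~\ref{wall}, see the discussion after Theorem~\ref{thm:walls} in the introduction and Remark~\ref{rmk:third condition} --- so one cannot assume that the chambers cover $\Pos(X)$, nor that every hyperplane avoiding all chamber interiors coincides with a wall hyperplane. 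Your key sentence ``one shows that a hyperplane meeting $\widebar{\BAmp}(X)$ but disjoint from the interiors of all chambers must contain one of the wall hyperplanes, hence equal it'' is therefore neither proved nor citable in this setting, and the subsequent identification $D=c\gamma$ with a prime exceptional or flopping class is unjustified.

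The paper's proof is much shorter and sidesteps all of this. From condition~(3) one obtains $\varphi$ with $\varphi(D)^\perp\cap\widebar{\BAmp}(X)\neq\emptyset$ (closure in $\Pos(X)$); by condition~(2) any $H'$ in this intersection lies on $\partial\BAmp(X)$, hence is big and nef on some birational model $X'$. One then simply applies the base-point-free theorem to $(X',H')$ --- no chamber decomposition of $\Pos(X)$, no Morrison--Kawamata, no prior classification of wall classes. Taking $H'$ general in $\varphi(D)^\perp\cap\widebar{\BAmp}(X)$, the paper asserts that $H'$ lies on a single facet of $\widebar{\Amp}(X')$, so the resulting contraction has relative Picard rank one and its contracted curve is dual to $\varphi(D)$. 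The codimension-one subtlety you correctly flag is exactly this ``single facet'' step; the paper handles it in one line, and your longer route through the global chamber structure neither circumvents nor resolves it.
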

\begin{proof}
By the definition of wall divisor, it follows that $\varphi(D)^\perp$ does not intersect $\BAmp(X)$ but intersects $\widebar{\BAmp}(X)\cap \Pos(X)$ for some $\varphi\in\Mth(X)$. Therefore, there exists a birational model $X'$ of $X$ such that $\varphi(D)$ is orthogonal to a big and nef divisor $H'$. Without loss of generality, we can take $H'$ general in $\varphi(D)^\perp\cap\BAmp(X)$, so that $H'$ will lie in one facet of the ample cone of a birational model of $X$. By applying the base-point-free theorem to $(X', H')$, we contract only a curve (and its multiples), which is dual to $\varphi(D)$. 
\end{proof}
\begin{lemma}\label{lem:wall_are_deformable}
Let $X$ be a projective primitive symplectic variety and let $D$ be a wall divisor. Then, there exists an effective curve $C$ such that $\Hdg(C)=\Hdg(D)$ and $C$ deforms along its Hodge locus.
\end{lemma}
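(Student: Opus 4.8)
The plan is to combine Lemma~\ref{lem:wall_are_extremal} with the deformation-in-Hodge-locus statement that was already used in the proof of Proposition~\ref{prop:contracted are wall}. First I would apply Lemma~\ref{lem:wall_are_extremal} to obtain a Hodge isometry $\varphi\in\Mth(X)$ and a birational model $X'$ such that $\varphi(D)$ is dual to an extremal curve $C'$ on $X'$; concretely, $C'$ is the (multiple of a) curve contracted by the extremal contraction attached to a general big and nef $H'\in\varphi(D)^\perp$, so $C'$ is effective and of type $(2n-1,2n-1)$. By \cite[Corollary 2.14]{LMP22}, such an extremal curve deforms along its Hodge locus, i.e.\ along the locus in the (locally trivial) deformation space of $X'$ where the class $[C']$ stays of Hodge type; on that locus a multiple of $[C']$ stays effective.

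Next I would transport this back to $X$. Since $X'$ is birational to $X$ via some $f:X\dashrightarrow X'$ and $\varphi$ is a parallel-transport Hodge isometry of $H^2$, the composite identifies $\Hdg(D)$ inside the moduli of $X$ with $\Hdg([C']^\vee)$ inside the moduli of $X'$; dualizing, the class $C:=(f^{-1})_*\big(\varphi^{-1}([C']^\vee{}^\vee)\big)$ — more plainly, the curve class on $X$ corresponding to $C'$ under $f$ and $\varphi$ — is a homology class with $\Hdg(C)=\Hdg(D)$, and up to sign and up to a positive multiple it is represented by an effective curve, because effectivity of the extremal curve is preserved under the birational map $f$ (as in the argument of Proposition~\ref{prop:contracted are wall}, using \cite[Lemma 5.17]{Mar13}) and under parallel transport along the Hodge locus. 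Replacing $C$ by its negative if necessary, we get an effective curve $C$ on $X$ (or on a deformation of $X$, if one prefers to move first into the projective locus) with $\Hdg(C)=\Hdg(D)$ that deforms along its Hodge locus, which is exactly the assertion.

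The one genuinely delicate point — the main obstacle — is matching up the two Hodge loci: the Hodge locus of the divisor class $D$ on the moduli of $X$ and the Hodge locus of the dual curve class on the moduli of the birational model $X'$. Because $f_*$ and $\varphi$ are isometries of $H^2$ that are compatible with the period map and with the identification $H_2\cong H^2$ via $q_X$, the condition ``$q_X(D,-)$ stays of type $(1,1)$'' and the condition ``$[C']$ stays of type $(2n-1,2n-1)$'' cut out the same locus in the common deformation space; one must invoke \cite[Corollary 6.10]{BL18} to pass between $X$, its projective deformations, and $X'$, and \cite[Corollary 2.14]{LMP22} to know the extremal curve actually moves along this locus rather than merely being contained in it as a cycle class. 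Once this identification is in place, the rest is bookkeeping with signs and multiples.
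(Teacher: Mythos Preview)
Your proposal is correct and follows essentially the same route as the paper's proof: apply Lemma~\ref{lem:wall_are_extremal} to obtain an extremal curve on a birational model via a monodromy operator, invoke \cite[Corollary~2.14]{LMP22} to deform it along its Hodge locus, and then observe that this Hodge locus contains the pair $(X,C)$ up to sign. The paper's version is terser---it absorbs the birational transport and the Hodge-locus identification into the single phrase ``which contains also the pair $(X,C)$ (up to sign)''---while you spell out the bookkeeping with $f_*$, $\varphi$, and the duality; but the underlying argument is the same.
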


\begin{proof}
This is a direct consequence of Lemma \ref{lem:wall_are_extremal}: indeed, there exists an element $\varphi\in\Mth(X)$ and a birational model $X'$ such that $\varphi(D)$ is dual to an extremal curve $\varphi(C)$. By \cite[Corollary 2.14]{LMP22}, $\varphi(C)$ deforms in its Hodge locus (in $\Def(X')$ ), which contains also the pair $(X,C)$ (up to sign). Therefore $D$ is dual to the effective curve $\pm C$. 
\end{proof}
\begin{proposition}\label{prop:walls_deform}
Let $X$ be a projective primitive symplectic variety, and let $D$ be a wall divisor on $X$. Let $Y$ be another projective primitive symplectic variety such that there exists a locally trivial parallel transport operator $\varphi$ from $X$ to $Y$ such that $\varphi(D)\in \Pic(Y)$. Then $\varphi(D)$ is a wall divisor.
\end{proposition}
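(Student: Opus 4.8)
The plan is to verify the three defining conditions of Definition~\ref{definition wall divisor} for $\varphi(D)$ on $Y$, transporting each one from $X$ along $\varphi$. Throughout I will use that $\varphi$ is a locally trivial parallel transport operator which is moreover a Hodge isometry, so it preserves $q_Y$, sends $\Mth(X)$ isomorphically onto $\Mth(Y)$ by conjugation, and (by the results recalled in the excerpt, in particular \cite[Theorem 6.16]{BL21} and the deformation-invariance statements behind Remark~\ref{remark:cones}) is compatible with the cones $\Pos$, $\BAmp$ and their closures under parallel transport.

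First, \textbf{Negativity} is immediate: $q_Y(\varphi(D)) = q_X(D) < 0$ since $\varphi$ is an isometry. Next, \textbf{Ample-non-orthogonality}: I must show that for every $\psi \in \Mth(Y)$ we have $\psi(\varphi(D))^\perp \cap \BAmp(Y) = \emptyset$. Given such a $\psi$, the composite $\psi \circ \varphi$ is a locally trivial parallel transport operator from $X$ to $Y$ which is a Hodge isometry; the key geometric input is Lemma~\ref{lem:wall_are_deformable}, which produces an effective curve $C$ on $X$ with $\Hdg(C) = \Hdg(D)$ deforming along its Hodge locus, so $D$ is dual to $\pm C$. Transporting $C$ along $\psi\circ\varphi$ (using that it deforms in its Hodge locus, which by \cite[Corollary 6.10]{BL18} and \cite[Corollary 2.14]{LMP22} contains the pair on $Y$ up to sign) yields an effective curve $C'$ on $Y$ dual, up to sign, to $\psi(\varphi(D))$. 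An effective curve cannot be orthogonal to the ample cone of any birational model, so $\psi(\varphi(D))^\perp \cap \Amp(Y') = \emptyset$ for every relevant $Y'$, whence $\psi(\varphi(D))^\perp \cap \BAmp(Y) = \emptyset$. This is essentially the argument already used inside the proof of Proposition~\ref{prop:contracted are wall}, now carried out after transport.

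For \textbf{Semiample-orthogonality} I must exhibit some $\psi \in \Mth(Y)$ with $\psi(\varphi(D))^\perp \cap \widebar{\BAmp}(Y) \neq 0$. By hypothesis there is $\varphi_0 \in \Mth(X)$ with $\varphi_0(D)^\perp \cap \widebar{\BAmp}(X) \neq 0$; I would combine this with Lemma~\ref{lem:wall_are_extremal}, which upgrades the wall-divisor data on $X$ to an honest extremal contraction: there is $\varphi_1 \in \Mth(X)$ and a birational model $X'$ on which $\varphi_1(D)$ is dual to an extremal curve, hence orthogonal to $\pi^*\Amp(Z) \subset \widebar{\BAmp(X')}$. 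Transporting this contraction (the curve, the model, and the nef class it is orthogonal to) along the appropriate parallel transport operator to $Y$ — this is legitimate because birational models and relative Picard-rank-one contractions are stable under locally trivial parallel transport, and extremal curves deform in their Hodge locus by \cite[Corollary 2.14]{LMP22} — gives a model $Y'$ of $Y$ with a nef-and-big class $H'$ such that, for the transported monodromy element $\psi$, $\psi(\varphi(D))^\perp$ meets $\widebar{\BAmp}(Y) \cap \Pos(Y)$ in a nonzero class. Equivalently, one may simply set $\psi = \varphi \circ \varphi_1 \circ \varphi^{-1} \in \Mth(Y)$ and check directly, using the compatibility of $\widebar{\BAmp}$ with parallel transport, that $\psi(\varphi(D))^\perp = \varphi(\varphi_1(D)^\perp)$ meets $\varphi(\widebar{\BAmp}(X)) = \widebar{\BAmp}(Y)$ nontrivially.

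\textbf{The main obstacle} I anticipate is not any single hard estimate but rather the bookkeeping needed to make ``transport a contraction along $\varphi$'' precise: one must know that the birational model $X'$ of $X$ carrying the extremal curve corresponds, under the parallel transport data, to a genuine projective birational model $Y'$ of $Y$ with the analogous contraction, and that the nef class witnessing the orthogonality is carried to a nef class — this rests on the locally trivial deformation theory of \cite{BL18, BL21} and on the fact (Remark~\ref{remark:cones}) that the cones in play are defined in a parallel-transport-equivariant way. Once that dictionary is in place, each of the three conditions transports formally, and the converse direction — that this is symmetric, so $\varphi^{-1}$ transports wall divisors on $Y$ back to $X$ — is automatic by the same argument applied to $\varphi^{-1}$. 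I would organize the write-up so that conditions (1) and (2) are dispatched quickly and the bulk of the proof is the careful transport of the extremal-contraction data needed for (3).
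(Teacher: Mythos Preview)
Your proposal is correct and follows essentially the same route as the paper: reduce via Lemma~\ref{lem:wall_are_deformable} (and Lemma~\ref{lem:wall_are_extremal}) to the situation where $D$ is dual to a contracted curve $C$, use that $C$ deforms along its Hodge locus to handle condition~(2) for every $\psi\in\Mth(Y)$, and deform the contraction $(X,\pi)$ itself along the locally trivial family to produce an extremal curve on a birational model of $Y$ witnessing condition~(3). One caution: your ``Equivalently'' shortcut, which presupposes $\varphi(\widebar{\BAmp}(X)) = \widebar{\BAmp}(Y)$, is not available a priori---that kind of compatibility is close to what one is ultimately trying to prove---so you should rely on your primary approach of transporting the contraction, which is precisely what the paper does.
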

\begin{proof}
Up to changing birational model and acting with $\Mth(X)$, by Lemma \ref{lem:wall_are_deformable} we can assume, without loss of generality, that $D$ is dual to a curve $C$ (up to sign) which deforms along its Hodge locus and is contractible by a map $\pi\,:X\to \widebar{X}$. Therefore, $\varphi(D)$ is dual to (up to sign) an effective curve on $Y$. Let $\psi$ be any element in $\Mth(Y)$: we have that $\psi(\varphi(C))$ is still in the Hodge locus of $C$, hence also $\psi(\varphi(D))$ is dual to (up to sign) an effective curve. It remains to prove that the third condition of Definition \ref{definition wall divisor} holds: for this, it suffices to take a locally trivial deformation of the pair $(X,\pi)$, which amounts to a deformation of the contracted manifold $\widebar{X}$, along the same family which transports $D$ to $\varphi(D)$: this gives a deformation $\varphi(C)$ of the contracted curve $C$, which is dual to $\varphi(D)$ and is extremal on a (possibly) different birational model of $Y$ so that our claim follows. 
\end{proof}
Summing up Propositions \ref{prop:contracted are wall} and \ref{prop:walls_deform} and Lemma \ref{lem:wall_are_extremal}, we have proven Theorem \ref{thm:walls}.\\ 

The simplest example of wall divisors, as in the smooth case, is given by prime exceptional divisors. In the singular setting, it is not obvious that prime exceptional divisors are automatically wall divisors, as it is not proven that a prime exceptional divisor is contractible on a primitive symplectic variety which is locally deformation equivalent to the initial one. However, the geometry of prime exceptional divisors is particularly interesting, as by the following proposition they are always uniruled:

\begin{corollary}\label{lemma curve nonnegative square}
Let $X$ be a projective primitive symplectic variety, and let $E \subset X$ be a prime exceptional divisor. Then $E$ is uniruled and if  $R$ denotes a curve in the ruling of $E$, then $q(R)< 0$ and $[E] = \lambda [R]^\vee$ for some $\lambda >0$.
\end{corollary}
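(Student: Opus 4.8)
The plan is to reduce the statement to the structure theory of divisorial contractions of prime exceptional divisors on a suitable birational model, and then to transport the conclusion back to $X$ along a birational map that is an isomorphism in codimension one. Concretely, I would first invoke \cite[Theorem~1.2]{LMP21} (the items already used in the proof of Proposition~\ref{prop:bound-denom}) together with \cite[Remark~3.11]{LMP22} to produce: a projective primitive symplectic variety $X'$, locally trivially deformation equivalent to $X$; a birational map $f\colon X\dashrightarrow X'$ which is an isomorphism in codimension one; and a relative Picard rank one contraction $\pi\colon X'\to Z$ whose exceptional divisor is $E':=f_*E$. Moreover the general $\pi$-fibre $\ell$ is a smooth $\P^1$ or a union of two smooth $\P^1$'s meeting transversally at one point, and $[\ell]=\frac{-2\,(E')^\vee}{q_{X'}(E')}$ in $H_2(X',\Q)$.

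Since the fibres $\{\ell_t\}$ dominate $E'$ and each is a (possibly nodal) rational curve, a general point of $E'$ lies on a smooth rational curve, so $E'$ is uniruled; let $\{R'_t\}$ be the resulting covering family of smooth rational curves in $E'$ (with $R'_t=\ell_t$ when $\ell_t$ is irreducible, and $R'_t$ a component of $\ell_t$ otherwise), and write $[R']$ for their common class. The expectation is that $[\ell]=m\,[R']$ with $m=1$ in the irreducible case and $m=2$ in the reducible case. Granting this, dualizing the formula for $[\ell]$ gives $[R']^\vee=\frac{-2\,E'}{m\,q_{X'}(E')}$, hence $[E']=\lambda'\,[R']^\vee$ with $\lambda':=-\tfrac{m}{2}\,q_{X'}(E')>0$, and $q_{X'}(R'):=q_{X'}([R']^\vee)=\frac{4}{m^2\,q_{X'}(E')}<0$, where we used $q_{X'}(E')<0$.

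It then remains to descend to $X$. Being an isomorphism in codimension one, $f$ induces a Hodge isometry between the second cohomology lattices of $X$ and $X'$, compatible with the duality $(-)^\vee$ between $H^2$ and $H_2$ and sending $[E]$ to $[E']$; and it restricts to a birational map $E\dashrightarrow E'$. Pulling the family $\{R'_t\}$ back through the (big) open subset on which $f$ is an isomorphism produces a covering family of rational curves on $E$, so $E$ is uniruled, and its general member $R$ satisfies $f_*[R]=[R']$. Transporting the two identities of the previous paragraph along $f_*$ then yields $q_X(R)<0$ and $[E]=\lambda\,[R]^\vee$ with $\lambda:=\lambda'>0$, which is the assertion.

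The one delicate point I anticipate is the reducible case of $\ell$, namely the claim that the two components of a general reducible $\pi$-fibre carry the same homology class, so that a single ruling class $[R]$ is well defined and $[\ell]=2[R]$. This should be extracted from the fine description of these contractions in \cite{LMP21,LMP22} — for instance from the fact that the two components are exchanged by the monodromy of the family $\{\ell_t\}_{t\in\pi(E')}$ (which forces equality of their classes), or that they are already numerically equivalent for local reasons. If one prefers to sidestep it, one may simply take for $R$ the whole fibre $\ell$, so that $m=1$ throughout and the subtlety disappears, at the cost of allowing $R$ to be a nodal rational curve. The remaining ingredients — the existence of the contracting model, birational invariance of uniruledness, and the isometry property of a birational map that is an isomorphism in codimension one — are standard or already available in the cited references.
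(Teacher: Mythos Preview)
Your argument has a genuine gap: the results you invoke from \cite{LMP21} and \cite{LMP22} carry standing hypotheses that the Corollary does not assume. Proposition~\ref{prop:bound-denom}, whose proof you point to, is stated for $X$ with $\Q$-factorial and terminal singularities; in particular, \cite[Theorem~1.2]{LMP21} needs $\Q$-factoriality to produce the contracting birational model $X'$, and \cite[Remark~3.11]{LMP22} (your formula $[\ell]=\tfrac{-2(E')^\vee}{q_{X'}(E')}$) needs terminality --- the paper itself warns immediately after Proposition~\ref{prop:bound-denom} that this fails otherwise. The Corollary, however, is stated for an arbitrary projective primitive symplectic variety. So as written, your reduction does not apply, and the transport step ``$f_*$ is a Hodge isometry between $H^2(X)$ and $H^2(X')$'' can also break down: if you repair the first step by passing through a $\Q$-factorialization $Y\to X$, then $b_2(Y)$ may exceed $b_2(X)$ and the induced map on $H^2$ is only an isometric embedding, not an isomorphism.

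For comparison, the paper's proof splits into two cases. When $X$ is $\Q$-factorial it simply cites \cite[Theorem~1.2, item (1)]{LMP21}, which already contains the full statement (so your detailed unpacking is correct there but unnecessary). When $X$ is not $\Q$-factorial, the paper takes a small $\Q$-factorialization $\pi\colon Y\to X$, applies \cite[Theorem~1.3]{LMP21} to deform the pair $(\pi^*E,R')$ along its Hodge locus inside $\Def^\lt(Y)$, and then uses \cite[Proposition~2.12]{LMP22} to pass to a very general point of the Hodge locus of $E$ in $\Def^\lt(X)\subset\Def^\lt(Y)$, where the Picard group of $X_t$ has rank one generated by $E_t$. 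At that point the proportionality $[E_t]=\lambda[R_t]^\vee$ is forced for free, and deformation invariance brings it back to $X$; positivity of $\lambda$ follows by pairing with a K\"ahler class. This deformation-to-rank-one trick is precisely what replaces your explicit formula and sidesteps both the terminality requirement and the $b_2$-mismatch issue.
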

\begin{proof}
If $X$ is $\Q$-factorial, this is  \cite[Theorem 1.2, item (1)]{LMP21}. Otherwise, we may take a $\Q$-factorialization, i.e. a proper birational $\pi:Y\to X$ such that $Y$ is $\Q$-factorial and $\pi$ is small (and hence crepant), which exists by \cite[Corollary~1.4.3]{BCHM10}. Then $\pi^*E=\pi^{-1}(E)$ is an irreducible divisor of the same square and we apply \cite[Theorem~1.3]{LMP21} to $\pi^*E$ on $Y$. This yields that (a multiple of) $\pi^*E$ and its ruling curve $R'$ deform over the entire Hodge locus of $\pi^*E$ (or equivalently of $R'$) inside the space $\Def^\lt(Y)$ of locally trivial deformations of $Y$. By \cite[Proposition 2.12]{LMP22}, the space $\Def^\lt(X)$ sits canonically inside $\Def^\lt(Y)$ and the Hodge locus of $E$ in $\Def^\lt(X)$ can be identified with the common Hodge locus of $\pi^*E$ and the orthogonal complement to $H^2(X,\Z)$ inside of $H^2(Y,\Z)$. In particular, we may deform $(Y,\pi^*E)$ (up to taking a multiple) to a very general point $t$ of the Hodge locus of $E$ on $X$. The corresponding primitive symplectic variety $X_t$ will have Picard group of rank one generated by $E$. As $R'$ deforms along to a curve ruling a divisor $E'_t$ on $Y_t$, also $R$ deforms to a curve $R_t$ in $X_t$ (which is the image of some deformation of $R'$). Hence, the dual of $R_t$ must be a multiple of $E_t$ and this statement is invariant under deformation. Positivity of $\lambda$ can be deduced by pairing with a Kähler class.
\end{proof}

\bibliography{literature}
\bibliographystyle{alpha}

\end{document}